\documentclass[a4paper,11pt,oneside]{article}
\usepackage{graphicx}
\usepackage{amscd}
\usepackage{amsmath}
\usepackage{caption}
\usepackage{amsfonts}
\usepackage{amssymb}
\usepackage{amsthm}
\usepackage{mathrsfs}
\usepackage{multicol}
\usepackage{color}
\usepackage[english]{babel}
\usepackage[T1]{fontenc}
\usepackage{textcomp}
\usepackage[utf8]{inputenc}
\usepackage{enumitem}
\usepackage{indentfirst}

\setlength{\textwidth}{16cm}

\setlength{\textheight}{22.5cm}

\setlength{\topmargin}{-1.5cm}

\setlength{\oddsidemargin}{-1mm}

\setlength{\evensidemargin}{-1mm}

\setlength{\abovedisplayskip}{3mm}

\setlength{\belowdisplayskip}{3mm}

\setlength{\abovedisplayshortskip}{0mm}

\setlength{\belowdisplayshortskip}{2mm}
\newcommand{\N}{\mathbb N}

\newcommand{\RR}{{{\rm I} \kern -.15em {\rm R} }}

\begin{document}
	\theoremstyle{plain} \newtheorem{thm}{Theorem}[section] \newtheorem{cor}[thm]{Corollary} \newtheorem{lem}[thm]{Lemma} \newtheorem{prop}[thm]{Proposition} \theoremstyle{definition} \newtheorem{defn}{Definition}[section] 

\newtheorem{oss}[thm]{Remark}
	\newtheorem{ex}{Example}[section]
	\newtheorem{lemma}{Lemma}[section]

\title{Consensus for the Hegselmann-Krause model   with\\ time
 variable time delays}
\author{{\sc Elisa Continelli \& Cristina Pignotti}
\\
Dipartimento di Ingegneria e Scienze dell'Informazione e Matematica\\
 Universit\`{a} degli Studi di L'Aquila\\
Via Vetoio, Loc. Coppito, 67010 L'Aquila Italy}

\maketitle

\begin{abstract}
In this paper, we analyze a Hegselmann-Krause opinion formation model with time-variable  time delay and prove that, if the influence function is always positive, then there is exponential convergence to consensus without requiring any smallness assumptions on the time delay function. The analysis is then extended to a model with distributed time delay.
\end{abstract}

\vspace{5 mm}

	\section{Introduction}

\setcounter{equation}{0}
Multiagent systems attracted, in recent years, the attention of many researchers in several scientific disciplines, such as biology \cite{Cama, CS1}, economics \cite{Marsan}, robotics \cite{Bullo, Jad}, control theory \cite{Aydogdu, Borzi, WCB, PRT, Piccoli}, social sciences \cite{Bellomo, Campi, CF}. In particular, we mention the celebrated Hegselmann-Krause opinion formation model \cite{HK} (see \cite{CFT, CFT2} for the related PDE model) and its second-order version, i.e. the  Cucker-Smale model \cite{CS1}  introduced to describe flocking phenomena. A typical feature is the emergence of a collective behavior, namely, under quite general assumptions, solutions to such systems converge to consensus or, in the case of the CS-model, to flocking. 
It is also natural to include in such models time delays, taking into account the times necessary for each agent to receive information from other agents or reaction times.

 In this paper, we deal with an opinion formation model with time-variable time delay. 
Multiagent systems with time delays have already been studied by some authors. Flocking results for the CS-model with delay have been proved  \cite{LW, CH, CL, CP, PT, HM} in different settings, under a smallness assumption on the time delay size. We mention also \cite{DH} for the analysis of a thermomechanical CS-model with delay. 

Concerning the Hegselmann-Krause model  for opinion formation, convergence to consensus results have been proved in presence of small time delays in \cite{CPP, P, H}.
More recently, in \cite{H3}, a consensus result is proved, in the case of a constant time delay, without requiring any upper bound on the time delay. We mention also \cite{Lu} for a consensus result without any smallness assumptions on the time delay size but in the particular case of constant interaction coefficients. Finally,  a flocking result has been recently obtained by \cite{Cartabia} for a CS-model with constant time delay without any restrictions on the time delay size, applying a step by step procedure.  We mention also \cite{PR} for a flocking result without smallness assumption on the time delay related to a CS-model with leadership. Here, we extend the argument of \cite{Cartabia} to the Hegselmann-Krause opinion formation model in the case of a time variable time delay. We then improve previous convergence to consensus results by removing the smallness assumption on the time variable time delay. We are also able to consider a more general influence function, without monotonicity assumptions.   

	Consider a finite set of $N\in\N$ particles, with $N\geq 2 $. Let $x_{i}(t)\in \RR^d$ be the opinion of the $i$-th particle at time $t$. We shall denote with $\lvert\cdot \rvert$ and $\langle\cdot,\cdot\rangle$ the usual norm and scalar product on $\RR^{d}$, respectively. The interactions between the elements of the system are described by the following  Hegselmann-Krause type model with  variable time delays
	\begin{equation}\label{hkp}
	\frac{d}{dt}x_{i}(t)=\underset{j:j\neq i}{\sum}a_{ij}(t)(x_{j}(t-\tau (t))-x_{i}(t)),\quad t>0,	 \forall i=1,\dots,N,
	\end{equation}
	with weights $a_{ij}$ of the form
	\begin{equation}\label{weight}
	a_{ij}(t):=\frac{1}{N-1}\psi( x_{i}(t), x_{j}(t-\tau (t))), \quad\forall t>0,\, \forall i,j=1,\dots,N,
	\end{equation}
	where $\psi:\RR^d\times\RR^d\rightarrow \RR$ is a positive function, and the time delay  $\tau :[0,\infty)\rightarrow[0,\infty)$ is a continuous function satisfying
	\begin{equation}\label{delay}
	0\leq \tau(t)\leq \bar\tau, \quad \forall t\ge 0, 
	\end{equation} for some positive constant $\bar\tau.$ The initial conditions
	\begin{equation}\label{incond}
	x_{i}(s)=x^{0}_{i}(s),\quad \forall s\in [-\bar\tau,0],\,\forall i=1,\dots,N,
	\end{equation}
	are assumed to be continuous functions.

The influence function $\psi$ is assumed to be continuous. Moreover, we assume that it is bounded and we denote
$$K:=\lVert \psi\rVert_{\infty}.$$ 
	For existence results for the above model, we refer e.g. to \cite{Halanay, HL}. Here, we will concentrate on the asymptotic behavior of solutions.

	For each $t\geq-\bar\tau$, we define the diameter $d(\cdot)$ as
 $$d(t):=\max_{i,j=1,\dots,N}\lvert x_{i}(t)-x_{j}(t)\rvert.$$
	\begin{defn}\label{consensus}
		We say that a solution $\{x_{i}\}_{i=1,\dots,N}$ to system \eqref{hkp} converges to \textit{consensus} if $$\lim_{t\to+\infty}d(t)=0.$$
	\end{defn}
We will prove the following convergence to consensus result.

\begin{thm}\label{cons}
 Assume that $\psi:\RR^d\times \RR^d\rightarrow\RR$ is a positive, bounded, continuous function. Moreover, let $x_{i}^{0}:[-\bar\tau,0]\rightarrow\RR^{d}$ be a continuous function,  for any $i=1,\dots,N.$ Then, every solution $\{x_{i}\}_{i=1,\dots,N}$ to \eqref{hkp} under the initial conditions \eqref{incond} satisfies the exponential decay estimate 
\begin{equation}\label{estimate}
		d(t)\leq \left(\max_{i,j=1,\dots,N}\,\,\max_{r,s\in [-\bar\tau,0]}\lvert x_{i}(r)-x_{j}(s)\rvert\right)e^{-\gamma(t-2\bar{\tau})},\quad \forall  t\geq 0,
	\end{equation}
	for a suitable positive constant $\gamma$ independent of $N.$
\end{thm}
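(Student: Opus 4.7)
The plan is to adapt the step-by-step contraction strategy introduced in \cite{Cartabia} for Cucker-Smale flocking with constant delay, modifying it to handle the first-order Hegselmann-Krause dynamics with a time-variable delay. A preliminary reduction is to work componentwise: for every unit vector $\omega\in\RR^d$, the scalar projections $y_i(t):=\langle x_i(t),\omega\rangle$ satisfy \eqref{hkp} with the same positive coefficients, and
$$d(t) = \sup_{|\omega|=1}\Bigl(\max_i y_i(t)-\min_i y_i(t)\Bigr),$$
so it is enough to prove the scalar decay uniformly in $\omega$. A first-exceedance argument on $\max_i y_i$ and $\min_i y_i$ gives positive invariance of the convex hull $\mathcal C$ of the history data $\{x_i^0(s):s\in[-\bar\tau,0],\ i=1,\dots,N\}$: if $\max_i y_i$ tried to exceed $\sup_{\mathcal C}\langle\cdot,\omega\rangle$, at the first violating time all delayed right-hand side terms would still lie in $\mathcal C$, forcing a non-positive derivative. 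Since $\mathcal C$ is compact and $\psi>0$ is continuous, one obtains $\psi_0:=\min_{\mathcal C\times\mathcal C}\psi>0$ and hence $\beta_i(t):=\sum_{j\neq i}a_{ij}(t)\in[\psi_0,K]$ uniformly in $t$ and $i$.

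I would then introduce the sliding extrema
$$\mathcal M(t) := \sup_{s\in[t-\bar\tau,t]}\max_i y_i(s),\qquad \mathfrak m(t) := \inf_{s\in[t-\bar\tau,t]}\min_i y_i(s),$$
and $\Phi(t):=\mathcal M(t)-\mathfrak m(t)$. A rolling version of the invariance argument shows that $\mathcal M$ is non-increasing and $\mathfrak m$ non-decreasing on $[0,\infty)$, so $\Phi$ is non-increasing with $\Phi(0)\leq \max_{i,j}\max_{r,s\in[-\bar\tau,0]}|\langle x_i(r)-x_j(s),\omega\rangle|$. The crux of the proof is the one-step contraction
$$\Phi(t+2\bar\tau)\leq \mu\,\Phi(t),\qquad \forall\, t\geq 0,$$
for some $\mu\in(0,1)$ depending only on $\psi_0,K,\bar\tau$ (and in particular neither on $N$ nor on $\omega$). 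Given this, iterating over successive windows of length $2\bar\tau$ yields $\Phi(2k\bar\tau)\leq \mu^k\Phi(0)$; combined with the monotonicity of $\Phi$ and taking the supremum over $\omega$, one obtains \eqref{estimate} with $\gamma:=-(\log\mu)/(2\bar\tau)$. The shift by $2\bar\tau$ in the exponent absorbs the trivial invariance bound on the initial window $[0,2\bar\tau]$.

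The main obstacle is proving the one-step contraction. My approach is to rewrite each scalar equation as $\dot y_i+\beta_i(t)\,y_i=\sum_j a_{ij}(t)\,y_j(t-\tau(t))$ and use the Duhamel representation
$$y_i(t+h) = e^{-\int_t^{t+h}\beta_i}\, y_i(t) + \int_t^{t+h} e^{-\int_u^{t+h}\beta_i(r)\,dr}\sum_j a_{ij}(u)\, y_j(u-\tau(u))\,du,$$
which expresses $y_i(t+h)$ as a convex combination of $y_i(t)$ and the delayed values $y_j(u-\tau(u))$. The quantitative step is to identify, at time $t$, an index $j^\star$ and time $s^\star\in[t-\bar\tau,t]$ with $y_{j^\star}(s^\star)$ within $\Phi(t)/4$ of $\mathfrak m(t)$; the uniform Lipschitz bound $|\dot y_{j^\star}|\leq K\,\Phi(t)$ extends this closeness to a subinterval of length depending only on $K$. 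One then argues by dichotomy: either the weighted average of the remaining indices seen via the delayed-time map already exceeds $\mathfrak m(t)+\eta\,\Phi(t)$ on a set of positive measure (in which case the relation $\dot y_{j^\star}\geq\psi_0\,\eta\,\Phi(t)$ pushes the minimum strictly upward), or a positive fraction of indices cluster near $\mathfrak m(t)$, in which case the Duhamel integrals for every $y_i$ pick up enough weight on these low values to force $\mathcal M$ to drop by a definite amount. Because $\beta_i\in[\psi_0,K]$ independently of $N$, the resulting gain is $N$-uniform; a symmetric analysis on $[t+\bar\tau,t+2\bar\tau]$ handles the complementary direction and yields the contraction with some $\mu\in(0,1)$.
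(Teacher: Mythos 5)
Your overall architecture coincides with the paper's: projection onto unit vectors, a first-exceedance/Gronwall argument giving invariance of the convex hull of the history (this is exactly Lemma \ref{L1}), the uniform lower bound $\psi_0>0$ of \eqref{stima_psi}, monotonicity of the windowed diameter (the paper's $D_n$, your $\Phi$), and iteration of a geometric contraction to obtain \eqref{estimate}. That part is sound. The gap is in the only step that produces actual decay, the strict contraction $\Phi(t+2\bar\tau)\le\mu\,\Phi(t)$, which you only sketch, and the sketch does not close. First, your dichotomy yields information about a single near-minimal agent $j^\star$ at a single time $s^\star$, whereas $\mathfrak m(t+2\bar\tau)$ is an infimum over \emph{all} agents and \emph{all} times in $[t+\bar\tau,t+2\bar\tau]$; raising $y_{j^\star}$ once does not raise $\mathfrak m$. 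Second, and more seriously, both branches require the low (or high) delayed values to be ``seen'' by the Duhamel integral on a set of $u$ of quantifiable positive measure. The delay is only continuous and bounded (\eqref{delay}); the map $u\mapsto u-\tau(u)$ has no monotonicity or Lipschitz control, so the preimage under it of the short interval (length $\sim\eta/K$, from your Lipschitz bound) on which you localized the low cluster can have arbitrarily small Lebesgue measure. The weight your Duhamel integrals pick up on those values therefore cannot be bounded below independently of $\tau$, and your $\mu$ would secretly depend on the modulus of continuity of the delay --- precisely the dependence the theorem must avoid. Relatedly, ``exceeds on a set of positive measure'' versus ``a positive fraction cluster near $\mathfrak m$'' is not an exhaustive dichotomy with quantitative content in both branches: an exceedance set of tiny positive measure falls into neither usable case.

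The paper avoids all of this. Its contraction lemma (inequality \eqref{n-2}) takes the pair $(i,j)$ realizing $d(n\bar\tau)$, sets $v$ along $x_i(n\bar\tau)-x_j(n\bar\tau)$, and splits on the \emph{sign} of $\langle x_i(\cdot)-x_j(\cdot),v\rangle$ over $[n\bar\tau-2\bar\tau,n\bar\tau]$: if the sign ever turns negative, the damping estimate \eqref{4} alone gives the factor $1-e^{-2K\bar\tau}$; if not, one writes $\frac{d}{dt}\langle x_i-x_j,v\rangle=S_1+S_2$ and observes that for each $l\ne i,j$ the delayed term $\langle x_l(t-\tau(t)),v\rangle$ appears once with each sign and cancels exactly, leaving the deterministic negative contribution $\psi_0(m_{n-1}-M_{n-1})$ \emph{no matter where the delayed times land}. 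That exact cancellation for the extremal pair is the idea your proposal is missing; without it (or an extra hypothesis such as a bound on $\tau'$) the averaging argument you describe cannot be made uniform in $\tau$ and $N$. Replacing your mass-near-the-minimum dichotomy with the sign dichotomy and the $S_1+S_2$ cancellation would repair the proof.
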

We will also consider the continuity type equation obtained as mean-field limit of the particle model when the number $N$ of the agents tends to infinity. Indeed,
since the constants appearing in the exponential consensus estimate for the discrete model are independent of the number of the agents, we can extend the consensus result to the related PDE model.

Moreover, we extend the results obtained for the Hegselmann-Krause model with a pointwise time delay to a model with distributed time delay, namely each agent is influenced by other agents' opinions in a certain time interval (cf. \cite{CP, P}).
In particular, we consider the system
\begin{equation}\label{hkd}
\frac{d}{dt}x_{i}(t)=\frac{1}{h(t)}\underset{j:j\neq i}{\sum}\int_{t-\tau_{2}(t)}^{t-\tau_{1}(t)}\alpha(t-s)a_{ij}(t;s) (x_{j}(s)-x_{i}(t)) ds,\quad t>0,\quad	 \forall i=1,\dots,N,
\end{equation}
where the time delays  $\tau_{1}:[0,\infty)\rightarrow[0,\infty)$, $\tau_{2}:[0,\infty)\rightarrow[0,\infty)$ are continuous functions satisfying \begin{equation}\label{delay_distr}
	0\leq\tau_{1}(t)<\tau_{2}(t)\leq\bar{\tau}, \quad\forall t\geq 0,
\end{equation} 
for some positive constant $\bar{\tau}$.
\\The communication rates $a_{ij}(t;s)$ are of the form
\begin{equation}\label{weightd}
a_{ij}(t;s):=\frac{1}{N-1}\psi(x_{i}(t), x_{j}(s)), \quad\forall t\geq 0,\, \quad\forall i,j=1,\dots,N,
\end{equation}
where $\psi:\RR^d\times\RR^d\rightarrow \RR$ is a positive function. 
\\Moreover, $\alpha:[0,\bar{\tau}]\rightarrow (0,+\infty)$ is a continuous weight function and
\begin{equation}\label{h(t)}
	h(t):=\int_{\tau_{1}(t)}^{\tau_{2}(t)}\alpha(s)ds, \quad\forall t\geq 0.
\end{equation}
Note that, since we assume $\tau_1(t)<\tau_2(t)$ and $\alpha (t)>0,$ $\forall t\geq 0,$ then the function $h(t)$ is always positive.

As before,  the initial conditions
$$
x_{i}(s)=x^{0}_{i}(s),\quad \forall s\in [-\bar\tau,0],\,\forall i=1,\dots,N,
$$
are assumed to be continuous functions.
Moreover, the influence function $\psi$ is assumed to be continuous and bounded, and let us denote
$K:=\lVert \psi\rVert_{\infty}.$

Also in this case, we obtain an exponential consensus estimate without any restrictions on the time delays sizes.
This extends and improves the analysis in \cite{P} where a consensus estimate has been obtained, in the case $\tau_1(t)\equiv 0,$ $\forall t\geq 0,$ subject to a smallness assumption on the time delay size. Moreover, here, as for the pointwise time delay case, we do not require any monotonicity properties on the influence function $\psi$ that is assumed only continuous and bounded.

Even in the distributed case, since the constants in the consensus estimate are independent of the number of the agents, one can extend the consensus theorem to the related PDE model.

The rest of the paper is organized as follows. In Sect. \ref{Prel} we give some preliminary results based on continuity arguments and Gronwall's inequality. In Sec. \ref{consensus_sec} we prove our consensus result for the particle model with pointwise time variable time delay, while in Sect. \ref{PDE} we formulate its extension to the related continuity type equation.
Finally, in Sect. \ref{distributed} we analyze the HK-model with distributed time delay \eqref{hkd}.
 
\section{Preliminaries}\label{Prel}
\setcounter{equation}{0}

\noindent Let $\{x_{i}\}_{i=1,\dots,N}$ be solution to \eqref{hkp} under the initial conditions \eqref{incond}. In this section we present some auxiliary lemmas.  We assume that the hypotheses of Theorem \ref{cons} are satisfied.

The following arguments  generalize and extend  the ones developed in   \cite{Cartabia} in the case of a Cucker-Smale model with constant time delay.
\begin{lem}\label{L1}
	For each $v\in \RR^{d}$ and $T\ge 0,$  we have that 
\begin{equation}\label{scalpr}
		\min_{j=1,\dots,N}\min_{s\in[T-\bar\tau,T]}\langle x_{j}(s),v\rangle\leq \langle x_{i}(t),v\rangle\leq \max_{j=1,\dots,N}\max_{s\in[T-\bar\tau,T]}\langle x_{j}(s),v\rangle,
	\end{equation}for all  $t\geq T-\bar\tau$ and $i=1,\dots,N$. 
\end{lem}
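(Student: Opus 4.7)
The statement is an invariance (or ``enclosing window'') property: every projection $\langle x_i(t),v\rangle$ for $t\ge T-\bar\tau$ stays within the range attained by the whole collection $\{\langle x_j(s),v\rangle\}$ on the past window $[T-\bar\tau,T]$. I would prove it by a first-touching barrier argument, perturbed linearly to cope with a possible degeneracy of the delay at the critical instant. By linearity it suffices to establish the upper bound, since the lower one follows on replacing $v$ with $-v$.

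Set $\varphi_i(t):=\langle x_i(t),v\rangle$ and
\[
M:=\max_{j=1,\dots,N}\max_{s\in[T-\bar\tau,T]}\varphi_j(s),
\]
so $\varphi_i\le M$ on $[T-\bar\tau,T]$ by definition. For $\delta>0$ introduce the linear barrier $M_\delta(t):=M+\delta(1+t-T+\bar\tau)$, which is nondecreasing, strictly dominates every $\varphi_i$ on $[T-\bar\tau,T]$, and has slope $\delta$. The aim is to show $\varphi_i(t)<M_\delta(t)$ for all $t\ge T-\bar\tau$ and every $i$, and then let $\delta\to 0^+$ to recover $\varphi_i(t)\le M$.

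Suppose the claim fails for some $\delta>0$, and set
\[
t^*:=\inf\bigl\{t>T-\bar\tau:\exists\, i\text{ with }\varphi_i(t)=M_\delta(t)\bigr\}.
\]
By continuity, finiteness of the index set, and the strict inequality on $[T-\bar\tau,T]$, one has $t^*>T$, and there exists $i^*$ with $\varphi_{i^*}(t^*)=M_\delta(t^*)$ while $\varphi_j(s)<M_\delta(s)$ for every $j$ and $s\in[T-\bar\tau,t^*)$. Taking the scalar product of \eqref{hkp} with $v$ and observing that $t^*-\tau(t^*)\in[T-\bar\tau,t^*]$, the monotonicity of $M_\delta$ and the strict positivity $a_{i^*j}(t^*)>0$ (guaranteed by $\psi>0$) yield
\[
\varphi_j(t^*-\tau(t^*))\le M_\delta(t^*-\tau(t^*))\le M_\delta(t^*)=\varphi_{i^*}(t^*),
\]
so every term in
\[
\varphi_{i^*}'(t^*)=\sum_{j\ne i^*}a_{i^*j}(t^*)\bigl(\varphi_j(t^*-\tau(t^*))-\varphi_{i^*}(t^*)\bigr)
\]
is nonpositive, and hence $\varphi_{i^*}'(t^*)\le 0$. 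On the other hand, $\varphi_{i^*}-M_\delta$ is nonpositive on $[T-\bar\tau,t^*]$ with equality at $t^*$, so its classical derivative at $t^*$ must satisfy $\varphi_{i^*}'(t^*)\ge M_\delta'(t^*)=\delta>0$, contradicting the previous estimate. Thus $\varphi_i<M_\delta$ on $[T-\bar\tau,\infty)$ for every $\delta>0$, and $\delta\to 0^+$ delivers the claim.

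The only delicate point is the possible degeneracy $\tau(t^*)=0$: with the unperturbed barrier $M$ one would still get $\varphi_{i^*}'(t^*)\le 0$, but no strict incompatibility with the first-touching relation $\varphi_{i^*}'(t^*)\ge 0$. The linear perturbation with slope $\delta$ opens a genuine gap between the two bounds on $\varphi_{i^*}'(t^*)$, producing the desired contradiction regardless of whether the delay vanishes at $t^*$.
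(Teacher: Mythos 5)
Your proof is correct, and it takes a genuinely different route from the paper's. Both arguments perturb the threshold and argue by contradiction, but the mechanisms differ. The paper keeps a \emph{constant} barrier $M_T+\epsilon$, lets $S^{\epsilon}$ be the first time the maximum reaches it, and integrates the differential inequality $\frac{d}{dt}\langle x_i(t),v\rangle\le K\bigl(M_T+\epsilon-\langle x_i(t),v\rangle\bigr)$ via Gronwall over all of $(T,S^{\epsilon})$; this produces the uniform quantitative gap $\epsilon e^{-K(S^{\epsilon}-T)}$ that contradicts $\max_i\langle x_i(t),v\rangle\to M_T+\epsilon$ as $t\to S^{\epsilon-}$. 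You instead tilt the barrier, $M_\delta(t)=M+\delta(1+t-T+\bar\tau)$, and work purely pointwise at the first touching time $t^*$: the delayed structure forces $\varphi_{i^*}'(t^*)\le 0$, while the first-touching condition forces $\varphi_{i^*}'(t^*)\ge\delta>0$. Your approach is more elementary (no integration, no Gronwall), and your closing remark correctly identifies the exact reason a perturbation is needed at all --- with a flat barrier and $\tau(t^*)=0$ the two one-sided bounds on $\varphi_{i^*}'(t^*)$ are compatible --- which is the same degeneracy the paper's exponential gap is designed to beat. What the paper's Gronwall computation buys in exchange is reusability: essentially the same integrated estimate reappears later (in the proofs of \eqref{4}, \eqref{n+1} and \eqref{n-2}) where a merely qualitative invariance statement would not suffice. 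One cosmetic point: you only need $a_{i^*j}(t^*)\ge 0$, not strict positivity, for the sign argument at $t^*$.
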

\begin{proof}
	First of all, we note that the inequalities in \eqref{scalpr} are satisfied for every  $t\in [T-\bar\tau,T]$.
	\\Now, let $T\geq 0$. Given a vector $v\in \RR^{d}$, we set $$M_T=\max_{j=1,\dots,N}\max_{s\in[T-\bar\tau,T]}\langle x_{j}(s),v\rangle.$$
	For all $\epsilon >0$, let us define
	$$K^{\epsilon}:=\left\{t>T :\max_{i=1,\dots,N}\langle x_{i}(s),v\rangle < M_T+\epsilon,\,\forall s\in [T,t)\right\}.$$
	By continuity, we have that $K^{\epsilon}\neq\emptyset$. Thus, denoted with $$S^{\epsilon}:=\sup K^{\epsilon},$$
	it holds that $S^{\epsilon}>T$. \\We claim that $S^{\epsilon}=+\infty$. Indeed, suppose by contradiction that $S^{\epsilon}<+\infty$. Note that by definition of $S^{\epsilon}$ it turns out that \begin{equation}\label{max}
		\max_{i=1,\dots,N}\langle x_{i}(t),v\rangle<M_T+\epsilon,\quad \forall t\in (T,S^{\epsilon}),
	\end{equation}
	and \begin{equation}\label{teps}
		\lim_{t\to S^{\epsilon-}}\max_{i=1,\dots,N}\langle x_{i}(t),v\rangle=M_T+\epsilon.
	\end{equation}
	For all $i=1,\dots,N$ and $t\in (T,S^{\epsilon})$, we compute
	$$\frac{d}{dt}\langle x_{i}(t),v\rangle=\frac{1}{N-1}\sum_{j:j\neq i}\psi(x_{i}(t), x_{j}(t-\tau(t)))\langle x_{j}(t-\tau(t))-x_{i}(t),v\rangle.$$
	Notice that, being $t\in (T,S^{\epsilon})$, then  $t-\tau(t)\in (T-\bar\tau, S^{\epsilon})$ and
 \begin{equation}\label{t-tau}
		\langle x_{j}(t-\tau(t)),v\rangle< M_T+\epsilon,\quad \forall j=1, \dots, N.
	\end{equation}
Moreover, \eqref{max} implies that $$\langle x_{i}(t),v\rangle<M_T+\epsilon,$$
	so that $$M_T+\epsilon-\langle x_{i}(t),v\rangle\geq 0.$$Combining this last fact with \eqref{t-tau}, we can write $$\frac{d}{dt}\langle x_{i}(t),v\rangle\leq \frac{1}{N-1}\sum_{j:j\neq i}\psi(x_{i}(t), x_{j}(t-\tau(t)))(M_T+\epsilon-\langle x_{i}(t),v\rangle)$$$$\leq K(M_T+\epsilon-\langle x_{i}(t),v\rangle), \quad\forall t\in (T, S^{\epsilon}).$$
	Then, from Gronwall's inequality we get
 $$\begin{array}{l}
\vspace{0.2cm}\displaystyle{
\langle x_{i}(t),v\rangle\leq e^{-K(t-T)}\langle x_{i}(T),v\rangle+K(M_T+\epsilon)\int_{T}^{t}e^{-K(t-s)}ds}\\
\vspace{0.3cm}\displaystyle{\hspace{1.7 cm}
=e^{-K(t-T)}\langle x_{i}(T),v\rangle+(M_T+\epsilon)e^{-Kt}(e^{Kt}-e^{KT})}\\
\vspace{0.3cm}\displaystyle{\hspace{1.7 cm}
=e^{-K(t-T)}\langle x_{i}(T),v\rangle+(M_T+\epsilon)(1-e^{-K(t-T)})}\\
\vspace{0.3cm}\displaystyle{\hspace{1.7 cm}
\leq e^{-K(t-T)}M_T+M_T+\epsilon -M_Te^{-K(t-T)}-\epsilon e^{-K(t-T)}}\\
\vspace{0.3cm}\displaystyle{\hspace{1.7 cm}
=M_T+\epsilon-\epsilon e^{-K(t-T)}}\\
\displaystyle{\hspace{1.7 cm}
	=M_T+\epsilon-\epsilon e^{-K(S^{\epsilon}-T)},}
\end{array}
$$for all $t\in (T, S^{\epsilon})$.	We have so proved that, $\forall i=1,\dots, N,$
$$\langle x_{i}(t),v\rangle\leq M_T+\epsilon-\epsilon e^{-K(S^{\epsilon}-T)}, \quad \forall t\in (T,S^{\epsilon}).$$
Thus, we get
 \begin{equation}\label{lim}
		\max_{i=1,\dots,N} \langle x_{i}(t),v\rangle\leq M_T+\epsilon-\epsilon e^{-K(S^{\epsilon}-T)}, \quad \forall t\in (T,S^{\epsilon}).
	\end{equation}
	Letting $t\to S^{\epsilon-}$ in \eqref{lim}, from \eqref{teps} we have that $$M_T+\epsilon\leq M_T+\epsilon-\epsilon e^{-K(S^{\epsilon}-T)}<M_T+\epsilon,$$
	which is a contraddiction. Thus, $S^{\epsilon}=+\infty$, which means that $$\max_{i=1,\dots,N}\langle x_{i}(t),v\rangle<M_T+\epsilon, \quad \forall t>T.$$
	From the arbitrariness of $\epsilon$ we can conclude that $$\max_{i=1,\dots,N}\langle x_{i}(t),v\rangle\leq M_T, \quad \forall t>T,$$
	from which $$\langle x_{i}(t),v\rangle\leq M_T, \quad \forall t>T, \,\forall i=1,\dots,N,$$
	which proves the second inequality in \eqref{scalpr}. Now, to prove the other inequality, let $v\in \RR^{d}$ and define $$m_T=\min_{j=1,\dots,N}\min_{s\in[T-\bar{\tau},T]}\langle x_{j}(s),v\rangle.$$
	Then, for all $i=1,\dots,N$ and $t>T$, by applying the second inequality in \eqref{scalpr} to the vector $-v\in\RR^{d}$ we get $$-\langle x_{j}(s),v\rangle=\langle x_{i}(t),-v\rangle\leq \max_{j=1,\dots,N}\max_{s\in[T-\bar{\tau},T]}\langle x_{j}(s),-v\rangle$$$$=-\min_{j=1,\dots,N}\min_{s\in [T-\bar{\tau},T]}\langle x_{j}(s),v\rangle=-m_T,$$
	from which $$\langle x_{j}(s),v\rangle\geq m_T.$$
	Thus, also the first inequality in \eqref{scalpr} is fullfilled.
\end{proof}

We now introduce some notation.

\begin{defn}\label{notation}
	We define
 $$D_{0}=\max_{i,j=1,\dots,N}\,\,\max_{s, t\in [-\bar\tau, 0]}\lvert x_{i}(s)-x_{j}(t)\rvert,$$ 
and in general,  $\forall n\in \mathbb{N}$,
	$$D_{n}:=\max_{i,j=1,\dots,N}\,\,\max_{s,t\in [n\bar\tau-\bar\tau, n\bar\tau]}\lvert x_{i}(s)-x_{j}(t)\rvert.$$
\end{defn}
\noindent Note that  inequality \eqref{estimate} can be written as 
$$d(t)\leq e^{-\gamma(t-2\bar\tau)}D_{0},\quad \forall t\geq 0.$$
Let us denote with $\N_0:=\N\cup\{0\}.$

\begin{lem}
For each $n\in \mathbb{N}_{0}$ and $i,j=1,\dots,N$, we get \begin{equation}
\label{dist}
	\lvert x_{i}(s)-x_{j}(t)\rvert\leq D_{n}, \quad\forall s,t\geq n\bar\tau-\bar\tau.
	\end{equation} 
\end{lem}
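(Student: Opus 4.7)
The plan is to deduce the inequality \eqref{dist} directly from Lemma \ref{L1} by choosing the test vector $v$ adapted to the pair $(i,s),(j,t)$ we want to estimate. The key observation is that $D_n$ controls all pairwise scalar products of positions taken in the reference window $[n\bar\tau-\bar\tau,n\bar\tau]$, and Lemma \ref{L1} propagates those bounds to every later time.

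First, I would fix $n\in\N_0$, indices $i,j$ and times $s,t\ge n\bar\tau-\bar\tau$, and set $v:=x_i(s)-x_j(t)$. If $v=0$ the inequality is trivial, so assume $v\ne 0$. Then I write
\[
|x_i(s)-x_j(t)|^{2}=\langle v,v\rangle=\langle x_i(s),v\rangle-\langle x_j(t),v\rangle .
\]
Applying Lemma \ref{L1} with $T=n\bar\tau$ to the vector $v$, and noting that both $s$ and $t$ lie in $[n\bar\tau-\bar\tau,+\infty)$ (either in the reference interval, in which case \eqref{scalpr} is trivial, or strictly beyond it, in which case \eqref{scalpr} applies), I obtain
\[
\langle x_i(s),v\rangle\le \max_{k,r}\langle x_k(r),v\rangle,\qquad \langle x_j(t),v\rangle\ge \min_{k,r}\langle x_k(r),v\rangle,
\]
where the max/min are taken over $k\in\{1,\dots,N\}$ and $r\in[n\bar\tau-\bar\tau,n\bar\tau]$.

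Combining these bounds, there exist indices $k_1,k_2$ and times $r_1,r_2\in[n\bar\tau-\bar\tau,n\bar\tau]$ realizing the extrema, so that
\[
|x_i(s)-x_j(t)|^{2}\le \langle x_{k_1}(r_1)-x_{k_2}(r_2),v\rangle \le |x_{k_1}(r_1)-x_{k_2}(r_2)|\,|v|\le D_n\,|x_i(s)-x_j(t)|,
\]
where the second inequality is Cauchy--Schwarz and the third is the definition of $D_n$. Dividing by the (nonzero) factor $|x_i(s)-x_j(t)|$ yields \eqref{dist}.

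There is no real obstacle here; the only point requiring a little care is to verify that Lemma \ref{L1} is applicable simultaneously to $s$ and to $t$ with the same choice $T=n\bar\tau$, which it is since both are assumed to be at least $n\bar\tau-\bar\tau$ and the lemma covers all such values. The argument is essentially the standard trick of converting a diameter estimate into the control of scalar projections along the separating direction.
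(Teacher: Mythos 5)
Your proof is correct and follows essentially the same route as the paper: apply Lemma \ref{L1} with $T=n\bar\tau$ in the direction $x_i(s)-x_j(t)$, bound the resulting max minus min by Cauchy--Schwarz and the definition of $D_n$. The only cosmetic difference is that you keep $v$ unnormalized and divide by $|x_i(s)-x_j(t)|$ at the end, whereas the paper takes $v$ to be the unit vector from the start.
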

\begin{proof}
Fix $n\in\mathbb{N}_{0}$ and $i,j=1,\dots,N$. Given $s,t\geq n\bar{\tau}-\bar{\tau}$, if $\lvert x_{i}(s)-x_{j}(t)\rvert=0$ then of course $D_{n}\geq 0=\lvert x_{i}(s)-x_{j}(t)\rvert$. Thus, we can assume $\lvert x_{i}(s)-x_{j}(t)\rvert>0$ and we set $$v=\frac{x_{i}(s)-x_{j}(t)}{\lvert x_{i}(s)-x_{j}(t)\rvert}.$$
	It turns out that $v$ is a unit vector and, by using \eqref{scalpr} with $T=n\bar\tau$ and the Cauchy-Schwarz inequality, we can write 
	$$\begin{array}{l}
	\vspace{0.3cm}\displaystyle{\lvert x_{i}(s)-x_{j}(t)\rvert=\langle x_{i}(s)-x_{j}(t),v\rangle=\langle x_{i}(s),v\rangle-\langle x_{j}(t),v\rangle}\\
	\vspace{0.3cm}\displaystyle{\leq \max_{l=1,\dots,N}\max_{r\in [n\bar{\tau}-\bar{\tau},n\bar{\tau}]}\langle x_{l}(r),v\rangle-\min_{l=1,\dots,N}\min_{r\in [n\bar{\tau}-\bar{\tau},n\bar{\tau}]}\langle x_{l}(r),v\rangle}\\
	\vspace{0.3cm}\displaystyle{\leq \max_{l,k=1,\dots,N}\max_{r,\sigma\in [n\bar{\tau}-\bar{\tau},n\bar{\tau}]}\langle x_{l}(r)-x_{k}(\sigma),v\rangle}	\\
	\displaystyle{\leq \max_{l,k=1,\dots,N}\max_{r, \sigma\in [n\bar{\tau}-\bar{\tau},n\bar{\tau}]}\lvert x_{l}(r)-x_{k}(\sigma)\rvert\lvert v\rvert=D_{n},}
	\end{array}$$
	which proves \eqref{dist}.
\end{proof}
\begin{oss}
Let us note that from \eqref{dist}, in particular, it follows that
\begin{equation}
\label{dx}
\lvert x_{i}(s)-x_{j}(t)\rvert\leq D_{0}, \quad\forall s,t\geq -\bar{\tau}.
\end{equation}
Moreover, it holds
 \begin{equation}\label{dec}
	D_{n+1}\leq D_{n},\quad \forall n\in \mathbb{N}_{0}.
\end{equation}
\end{oss}

With an analogous argument, one can find a bound on $\vert x_i(t)\vert,$ uniform with respect to $t$ and $i=1,\dots,N.$ Indeed, we have the following lemma.
\begin{lem}
	For every $i=1,\dots,N,$ we have that \begin{equation}\label{boundsol}
	\lvert x_{i}(t)\rvert\leq M^{0},\quad \forall t\geq-\bar{\tau},
	\end{equation}
	where 
$$M^{0}:=\max_{i=1,\dots,N}\,\,\max_{s\in [-\bar\tau, 0]}\lvert x_{i}(s)\rvert.$$
\end{lem}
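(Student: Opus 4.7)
The plan is to deduce \eqref{boundsol} directly from Lemma \ref{L1} by testing against a cleverly chosen unit vector, so that the Gronwall-type continuity argument already carried out in the proof of that lemma need not be repeated. First I would dispose of the trivial range: for $t\in[-\bar\tau,0]$ the estimate $\lvert x_i(t)\rvert\leq M^0$ is immediate from the very definition of $M^0$, so it suffices to treat $t>0$.

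Next I would fix $i\in\{1,\dots,N\}$ and $t>0$. If $x_i(t)=0$ there is nothing to prove; otherwise I would set
$$v:=\frac{x_i(t)}{\lvert x_i(t)\rvert},$$
so that $v\in\RR^d$ is a unit vector. Applying inequality \eqref{scalpr} with $T=0$ to this particular $v$, and then using the Cauchy--Schwarz inequality together with $\lvert v\rvert=1$, would yield
$$\lvert x_i(t)\rvert=\langle x_i(t),v\rangle\leq \max_{j=1,\dots,N}\max_{s\in[-\bar\tau,0]}\langle x_j(s),v\rangle\leq \max_{j=1,\dots,N}\max_{s\in[-\bar\tau,0]}\lvert x_j(s)\rvert=M^0,$$
which is exactly the claim.

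No real obstacle appears along this route, since the hard work — the continuity/Gronwall argument propagating the bound from $[-\bar\tau,0]$ to arbitrarily large times — has already been absorbed into Lemma \ref{L1}. The author's phrase \emph{with an analogous argument} seems to suggest instead repeating that Gronwall scheme directly on $\lvert x_i(t)\rvert$ (or on $\lvert x_i(t)\rvert^2$, to get a differentiable quantity), but the two-line reduction described above is strictly shorter and avoids introducing any new auxiliary estimate.
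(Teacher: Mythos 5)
Your proposal is correct and is essentially identical to the paper's own proof: the authors also take $v=x_i(t)/\lvert x_i(t)\rvert$ (after disposing of the case $x_i(t)=0$), apply \eqref{scalpr} with $T=0$, and conclude by Cauchy--Schwarz; the phrase ``analogous argument'' refers to this reduction to Lemma \ref{L1}, not to a fresh Gronwall iteration.
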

\begin{proof}
	Given $i=1,\dots,N$ and $t\geq-\bar{\tau}$, if $\lvert x_{i}(t)\rvert =0$ then trivially $M^{0}\geq 0=\lvert x_{i}(t)\rvert $. On the contrary, if $\lvert x_{i}(t)\rvert >0$, we define $$v=\frac{x_{i}(t)}{\lvert x_{i}(t)\rvert},$$
	which is a unit vector for which we can write
	$$\lvert x_{i}(t)\rvert=\langle x_{i}(t),v\rangle. $$
	Then, by applying \eqref{scalpr} for $T=0$ and by using the Cauchy-Schwarz inequality we get $$\lvert x_{i}(t)\rvert\leq \max_{j=1,\dots,N}\max_{s\in [-\bar{\tau},0]}\langle x_{j}(s),v\rangle\leq \max_{j=1,\dots,N}\max_{s\in [-\bar{\tau},0]}\lvert x_{j}(s)\rvert\lvert v\rvert$$$$=\max_{j=1,\dots,N}\max_{s\in [-\bar{\tau},0]}\lvert x_{j}(s)\rvert=M^{0},$$
	which proves \eqref{boundsol}.
\end{proof}
\begin{oss}
From the  estimate \eqref{boundsol}, since the influence function $\psi$ is continuous, we deduce that 
\begin{equation}\label{stima_psi}
\psi (x_i(t), x_j(t-\tau(t)))\ge \psi_0:=\min_{\vert y\vert, \vert z\vert \le M^{0}}\psi(y,z)>0,
\end{equation}
for all $t\ge 0,$ for all $i,j=1,\dots, N.$
\end{oss}

\begin{lem}
	For all $i,j=1,\dots,N$,  unit vector $v\in \RR^{d}$ and $n\in\mathbb{N}_{0}$ we have that 
	\begin{equation}\label{4}
		\langle x_{i}(t)-x_{j}(t),v\rangle\leq e^{-K(t-t_{0})}\langle x_{i}(t_{0})-x_{j}(t_{0}),v\rangle+(1-e^{-K(t-t_{0})})D_{n},
	\end{equation}
	for all $t\geq t_{0}\geq n\bar{\tau}$. \\Moreover, for all $n\in \mathbb{N}_{0}$, we get
	\begin{equation}\label{n+1}
		D_{n+1}\leq e^{-K\bar{\tau}}d(n\bar{\tau})+(1-e^{-K\bar{\tau}})D_{n}.
	\end{equation}
\end{lem}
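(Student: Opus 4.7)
The plan is to reduce both inequalities to scalar projections onto a unit vector $v\in\RR^{d}$ and to run on each projection the same Gronwall argument already used in Lemma~\ref{L1}. Fix $n\in\N_0$ and a unit vector $v$, and set $f_{i}(t):=\langle x_{i}(t),v\rangle$,
$$
M_{n}(v):=\max_{l=1,\dots,N}\,\max_{r\in[n\bar\tau-\bar\tau,n\bar\tau]}\langle x_{l}(r),v\rangle,\qquad m_{n}(v):=\min_{l=1,\dots,N}\,\min_{r\in[n\bar\tau-\bar\tau,n\bar\tau]}\langle x_{l}(r),v\rangle.
$$
Lemma~\ref{L1} with $T=n\bar\tau$ gives $m_{n}(v)\leq f_{l}(r)\leq M_{n}(v)$ for every $r\geq n\bar\tau-\bar\tau$ and every $l$; moreover, by Cauchy--Schwarz together with Definition~\ref{notation}, one has $M_{n}(v)-m_{n}(v)\leq D_{n}$.

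To prove \eqref{4}, fix $t_{0}\geq n\bar\tau$. For every $t\geq t_{0}$ one has $t-\tau(t)\geq n\bar\tau-\bar\tau$, whence $\langle x_{k}(t-\tau(t)),v\rangle\leq M_{n}(v)$, and therefore
$$
\dot f_{i}(t)=\sum_{k\neq i}a_{ik}(t)\bigl(\langle x_{k}(t-\tau(t)),v\rangle-f_{i}(t)\bigr)\leq K\bigl(M_{n}(v)-f_{i}(t)\bigr),
$$
using that $M_{n}(v)-f_{i}(t)\geq 0$ and $\sum_{k\neq i}a_{ik}(t)\leq K$. Multiplying by the integrating factor $e^{Kt}$ and integrating on $[t_{0},t]$ yields the one-sided bound
$$
f_{i}(t)\leq f_{i}(t_{0})e^{-K(t-t_{0})}+M_{n}(v)\bigl(1-e^{-K(t-t_{0})}\bigr),
$$
and the symmetric argument, applied to $-v$, produces
$$
f_{j}(t)\geq f_{j}(t_{0})e^{-K(t-t_{0})}+m_{n}(v)\bigl(1-e^{-K(t-t_{0})}\bigr).
$$
Subtracting and using $M_{n}(v)-m_{n}(v)\leq D_{n}$ gives \eqref{4}.

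For \eqref{n+1}, pick $i,j$ and $s,t\in[n\bar\tau,(n+1)\bar\tau]$ realizing $D_{n+1}$; if $D_{n+1}>0$ set $v:=(x_{i}(s)-x_{j}(t))/|x_{i}(s)-x_{j}(t)|$, so that $D_{n+1}=f_{i}(s)-f_{j}(t)$. Applying the two one-sided bounds above with $t_{0}=n\bar\tau$ separately at times $s$ and $t$, and subtracting,
$$
D_{n+1}\leq e^{-K(s-n\bar\tau)}\bigl(f_{i}(n\bar\tau)-M_{n}(v)\bigr)+e^{-K(t-n\bar\tau)}\bigl(m_{n}(v)-f_{j}(n\bar\tau)\bigr)+M_{n}(v)-m_{n}(v).
$$
Both bracketed quantities are non-positive, and $e^{-K(s-n\bar\tau)},e^{-K(t-n\bar\tau)}\in[e^{-K\bar\tau},1]$, so replacing each exponential coefficient by $e^{-K\bar\tau}$ only increases the right-hand side; after this replacement the expression collapses to
$$
e^{-K\bar\tau}\,\langle x_{i}(n\bar\tau)-x_{j}(n\bar\tau),v\rangle+\bigl(M_{n}(v)-m_{n}(v)\bigr)\bigl(1-e^{-K\bar\tau}\bigr)\leq e^{-K\bar\tau}d(n\bar\tau)+\bigl(1-e^{-K\bar\tau}\bigr)D_{n},
$$
using $|v|=1$ and $M_{n}(v)-m_{n}(v)\leq D_{n}$, which is \eqref{n+1}.

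The main technical point is the \emph{different-times} subtraction required for \eqref{n+1}: a direct application of \eqref{4} would need $s=t$, and the mismatch between $s$ and $t$ is absorbed by exploiting the signs of $f_{i}(n\bar\tau)-M_{n}(v)$ and $m_{n}(v)-f_{j}(n\bar\tau)$, which permits uniformly replacing both exponential factors by their common minimum $e^{-K\bar\tau}$ without disturbing the clean coefficient structure.
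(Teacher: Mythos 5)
Your proof is correct and follows essentially the same route as the paper: the same differential inequality $\dot f_i\leq K(M_n-f_i)$ combined with Gronwall to obtain the two one-sided bounds, subtraction for \eqref{4}, and for \eqref{n+1} the same sign argument that lets the factors $e^{-K(s-n\bar\tau)}$, $e^{-K(t-n\bar\tau)}$ be replaced by $e^{-K\bar\tau}$ (the paper just performs this replacement in each one-sided bound before subtracting rather than after). No gaps.
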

\begin{proof}
	Fix $n\in\mathbb{N}_{0}$ and $v\in\RR^{d}$ such that $\lvert v\rvert=1$. We set $$M_n=\max_{i=1,\dots,N}\max_{t\in [n\bar{\tau}-\bar{\tau},n\bar{\tau}]}\langle x_{i}(t),v\rangle,$$$$m_n=\min_{i=1,\dots,N}\min_{t\in [n\bar{\tau}-\bar{\tau},n\bar{\tau}]}\langle x_{i}(t),v\rangle.$$
	Then, it is easy to see that $M_n-m_n\leq D_{n}$. Now, for all $i=1,\dots,N$ and $t\geq t_{0}\geq n\bar{\tau}$ we have that 

$$
\begin{array}{l}
\displaystyle{
\frac{d}{dt}\langle x_{i}(t),v\rangle=\sum_{j:j\neq i}a_{ij}(t)\langle x_{j}(t-\tau(t))-x_{i}(t),v\rangle}\\
\displaystyle{\hspace{2 cm}=\frac{1}{N-1}\sum_{j:j\neq i}\psi( x_{i}(t), x_{j}(t-\tau(t)))(\langle x_{j}(t-\tau(t)),v\rangle-\langle x_{i}(t),v\rangle)}\\
\displaystyle{\hspace{2 cm}\leq \frac{1}{N-1}\sum_{j:j\neq i}\psi(x_{i}(t), x_{j}(t-\tau(t)))(M_n-\langle x_{i}(t),v\rangle).}
\end{array}
$$

	Note that, being $t\geq n\bar{\tau}$, $\langle x_{i}(t),v\rangle\leq M_n$ from \eqref{scalpr}. Therefore, we have that $M_n-\langle x_{i}(t),v\rangle\geq 0$ and we can write $$\frac{d}{dt}\langle x_{i}(t),v\rangle\leq\frac{1}{N-1}K\sum_{j:j\neq i}(M_n-\langle x_{i}(t),v\rangle)=K(M_n-\langle x_{i}(t),v\rangle).
$$
	Thus, from the Gronwall's inequality it comes that $$\langle x_{i}(t),v\rangle\leq e^{-K(t-t_{0})}\langle x_{i}(t_{0}),v\rangle+\int_{t_{0}}^{t}KM_ne^{-K(t-t_{0})+K(s-t_{0})}ds$$$$\hspace{1.3cm}=e^{-K(t-t_{0})}\langle x_{i}(t_{0}),v\rangle+e^{-K(t-t_{0})}M_n(e^{K(t-t_{0})}-1),$$
	that is \begin{equation}\label{2}
		\langle x_{i}(t),v\rangle\leq e^{-K(t-t_{0})}\langle x_{i}(t_{0}),v\rangle+(1-e^{-K(t-t_{0})})M_n.
	\end{equation}
	On the other hand, for all $i=1,\dots,N$ and $t\geq t_{0}\geq n\bar{\tau}$ it holds that
 $$
\begin{array}{l}
\displaystyle{
\frac{d}{dt}\langle x_{i}(t),v\rangle=\frac{1}{N-1}\sum_{i:j\neq i}\psi(x_{i}(t),x_{j}(t-\tau(t)))(\langle x_{j}(t-\tau(t)),v\rangle-\langle x_{i}(t),v\rangle)}\\
\displaystyle{
\hspace{2 cm}\geq \frac{1}{N-1}\sum_{j:j\neq i}\psi(x_{i}(t),x_{j}(t-\tau(t)))(m_n-\langle x_{i}(t),v\rangle).}
\end{array}
$$
	Note that from \eqref{scalpr} $\langle x_{i}(t),v\rangle\geq m_n$ since $t\geq n\bar{\tau}$. Thus, $m_n-\langle x_{i}(t),v\rangle\leq0$ and, by recalling that $\psi$ is bounded, we get $$\frac{d}{dt}\langle x_{i}(t),v\rangle\geq K(m_n-\langle x_{i}(t),v\rangle).$$
	Hence, by using the Gronwall's inequality it turns out that
	\begin{equation}\label{3}
		\langle x_{i}(t),v\rangle\geq e^{-K(t-t_{0})}\langle x_{i}(t_{0}),v\rangle+(1-e^{-K(t-t_{0})})m_n.
	\end{equation}
	Therefore, for all $i,j=1,\dots,N$ and $t\geq t_{0}\geq n\bar{\tau}$, by using \eqref{2} and \eqref{3} and by recalling that $M_n-m_n\leq D_{n},$ we finally get 
$$
\begin{array}{l}
\vspace{0.3cm}\displaystyle{\langle x_{i}(t)-x_{j}(t),v\rangle=\langle x_{i}(t),v\rangle-\langle x_{j}(t),v\rangle}\\
\vspace{0.3cm}\displaystyle{\hspace{2.8 cm}\leq e^{-K(t-t_{0})}\langle x_{i}(t_{0}),v\rangle+(1-e^{-K(t-t_{0})})M_n}\\
\vspace{0.3cm}\displaystyle{\hspace{2.8 cm}-e^{-K(t-t_{0})}\langle x_{j}(t_{0}),v\rangle-(1-e^{-K(t-t_{0})})m_n}\\
\vspace{0.3cm}\displaystyle{\hspace{2.8 cm}=e^{-K(t-t_{0})}\langle x_{i}(t_{0})-x_{j}(t_{0}),v\rangle+(1-e^{-K(t-t_{0})})(M_n-m_n)}\\
\displaystyle{\hspace{2.8 cm}\leq e^{-K(t-t_{0})}\langle x_{i}(t_{0})-x_{j}(t_{0}),v\rangle+(1-e^{-K(t-t_{0})})D_{n},}
\end{array}
$$
	i.e. \eqref{4} holds true.
	\\Now we prove \eqref{n+1}. Given $n\in\mathbb{N}_{0}$, let $i,j=1,\dots,N$ and $s,t\in [n\bar{\tau},n\bar{\tau}+\bar{\tau}]$ be such that $D_{n+1}=\lvert x_{i}(s)-x_{j}(t)\rvert$. Note that, if $\lvert x_{i}(s)-x_{j}(t)\rvert=0$, then obviously 
$$0=D_{n+1}\leq e^{-K\bar{\tau}}d(n\bar{\tau})+(1-e^{-K\bar{\tau}})D_{n}.$$ 
So we can assume $\lvert x_{i}(s)-x_{j}(t)\rvert>0$. Let us define the unit vector $$v=\frac{x_{i}(s)-x_{j}(t)}{\lvert x_{i}(s)-x_{j}(t)\rvert}.$$
	Hence, we can write $$D_{n+1}=\langle x_{i}(s)-x_{j}(t),v\rangle=\langle x_{i}(s),v\rangle-\langle x_{j}(t),v\rangle.$$
	Now, by using \eqref{2} with $t_{0}=n\bar{\tau}$, we have that$$\langle x_{i}(s),v\rangle\leq e^{-K(s-n\bar{\tau})}\langle x_{i}(n\bar{\tau}),v\rangle+(1-e^{-K(s-n\bar{\tau})})M_n$$$$\hspace{0.3 cm}=e^{-K(s-n\bar{\tau})}(\langle x_{i}(n\bar{\tau}),v\rangle-M_n)+M_n.$$
	Thus, since $s\leq n\bar{\tau}+\bar{\tau}$ and $\langle x_{i}(n\bar{\tau}),v\rangle-M_n\leq 0$ from \eqref{scalpr}, we get \begin{equation}\label{5}
		\begin{split}
		&\langle x_{i}(s),v\rangle\leq e^{-K\bar{\tau}}(\langle x_{i}(n\bar{\tau}),v\rangle-M_n)+M_n\hspace{1 cm}\\&\hspace{1.5 cm}\leq e^{-K\bar{\tau}}\langle x_{i}(n\bar{\tau}),v\rangle+(1-e^{-K\bar{\tau}})M_n.
		\end{split}
	\end{equation}	
	Similarly, by taking into account of \eqref{scalpr} and \eqref{3}, we have that 
	\begin{equation}\label{6}
		\langle x_{j}(t),v\rangle\geq e^{-K\bar{\tau}}\langle x_{j}(n\bar{\tau}),v\rangle+(1-e^{-K\bar{\tau}})m_n.
	\end{equation}
	Therefore, combining \eqref{5} and \eqref{6}, we can write$$D_{n+1}\leq e^{-K\bar{\tau}}\langle x_{i}(n\bar{\tau}),v\rangle+(1-e^{-K\bar{\tau}})M_n-e^{-K\bar{\tau}}\langle x_{j}(n\bar{\tau}),v\rangle-(1-e^{-K\bar{\tau}})m_n$$$$=e^{-K\bar{\tau}}\langle x_{i}(n\bar{\tau})- x_{j}(n\bar{\tau}),v\rangle+(1-e^{-K\bar{\tau}})(M_n-m_n).$$Then, by recalling that $M_n-m_n\leq D_{n}$ and by using the Cauchy-Schwarz inequality, we can conclude that
	$$D_{n+1}\leq e^{-K\bar{\tau}}\lvert x_{i}(n\bar{\tau})-x_{j}(n\bar{\tau})\rvert\lvert v\rvert+(1-e^{-K\bar{\tau}})D_{n}$$$$\leq e^{-K\bar{\tau}}d(n\bar{\tau})+(1-e^{-K\bar{\tau}})D_{n}.$$
\end{proof}
\begin{lem}
	There exists a constant $C\in (0,1),$ independent of $N\in\N,$ such that
	\begin{equation}\label{n-2}
		d(n\bar{\tau})\leq C D_{n-2},
	\end{equation}for all $n\geq 2$.
\end{lem}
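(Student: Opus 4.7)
Fix an arbitrary unit vector $v \in \RR^d$ and set
$M^* := M_{n-2}$, $m^* := m_{n-2}$ (the max and min of $\langle x_j(s), v\rangle$ over $j$ and $s \in [(n-3)\bar\tau, (n-2)\bar\tau]$), together with the midpoint $\ell := (M^* + m^*)/2$. Since $M^* - m^* \le D_{n-2}$ and $d(n\bar\tau)$ is realized in some direction $v$, it suffices to bound $\max_j \langle x_j(n\bar\tau), v\rangle - \min_j \langle x_j(n\bar\tau), v\rangle$ by $C\,D_{n-2}$ with $C \in (0,1)$ independent of $N$. I would proceed by a case analysis on the distribution of the values $\{\langle x_j((n-2)\bar\tau), v\rangle\}_{j=1}^N$ relative to $\ell$, setting $t_0 := (n-2)\bar\tau$.

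Suppose first that $\langle x_j(t_0), v\rangle \ge \ell$ for every $j$. Since for $t \ge t_0$ the delayed-value bound $\langle x_k(t-\tau(t)), v\rangle \ge m^*$ follows from \eqref{scalpr} applied with $T = (n-2)\bar\tau$, estimate \eqref{3} applies on $[t_0, n\bar\tau]$ with the minimum value $m^*$, giving for every $j$
\[
\langle x_j(n\bar\tau), v\rangle \ge e^{-2K\bar\tau}\langle x_j(t_0), v\rangle + (1 - e^{-2K\bar\tau})\, m^* \ge m^* + \tfrac{1}{2}e^{-2K\bar\tau}(M^* - m^*).
\]
Combined with the upper bound $\langle x_j(n\bar\tau), v\rangle \le M^*$ from \eqref{scalpr}, this yields a direction-$v$ spread of at most $(1 - \tfrac{1}{2}e^{-2K\bar\tau})D_{n-2}$ at time $n\bar\tau$. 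The symmetric sub-case, with all $\langle x_j(t_0), v\rangle \le \ell$, uses \eqref{2} and produces the same bound with the same $N$-independent constant.

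In the remaining \emph{two-sided} sub-case, there exist indices $j_0, j_1$ with $\langle x_{j_0}(t_0), v\rangle < \ell < \langle x_{j_1}(t_0), v\rangle$. Applying \eqref{2} to $j_0$ and \eqref{3} to $j_1$ confines their trajectories to
\[
\langle x_{j_0}(t), v\rangle \le M^* - \delta, \qquad \langle x_{j_1}(t), v\rangle \ge m^* + \delta, \qquad t \in [t_0, n\bar\tau],
\]
where $\delta := \tfrac{1}{2}e^{-2K\bar\tau}(M^* - m^*)$. Since for $t \in [(n-1)\bar\tau, n\bar\tau]$ one has $t - \tau(t) \in [t_0, n\bar\tau]$, these bounds transfer verbatim to the delayed values entering the right-hand side of \eqref{hkp}. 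I would then invoke the uniform lower bound $\psi \ge \psi_0 > 0$ of \eqref{stima_psi} and split the sum defining $\dot x_i(t)$ into the outlier contribution (from $j_0$ or $j_1$) and the bulk contribution, running a refined Gronwall estimate on the extremal trajectories at $n\bar\tau$ to extract a geometric contraction factor $C_2 \in (0,1)$. Taking $C := \max\{1 - \tfrac{1}{2}e^{-2K\bar\tau}, C_2\}$ then finishes the proof.

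The main obstacle will be this two-sided sub-case: the pull on an individual agent from a single outlier scales as $\psi_0/(N-1)$, so a naive estimate would produce an $N$-dependent contraction. The key is that the extremal trajectories at $n\bar\tau$ are themselves subject to the pull of the \emph{entire} bulk, whose aggregate mass is of order $\psi_0$ independent of $N$; combining this with the confinement estimates on $x_{j_0}, x_{j_1}$ and the positivity $\psi_0>0$, one must carefully balance these contributions to recover a uniform contraction. The variable delay $\tau(t)$ is handled cleanly because the confinement bounds above hold on the entire interval $[t_0, n\bar\tau]$, so the value of $\tau(t)$ does not affect the estimate.
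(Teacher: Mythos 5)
Your Cases A and B (all agents on one side of the midpoint $\ell$ at time $t_0=(n-2)\bar\tau$) are correct and do give an $N$-independent factor $1-\tfrac12 e^{-2K\bar\tau}$. The gap is exactly where you locate it: the two-sided case is not proved, and the plan you sketch for it does not close. Knowing only that \emph{one} agent $j_0$ satisfies $\langle x_{j_0}(t),v\rangle\le M^*-\delta$ on $[t_0,n\bar\tau]$, the extra downward pull it exerts on the maximal agent through \eqref{hkp} is at most $\tfrac{1}{N-1}\psi_0\delta$, because $j_0$ enters the sum with weight $\tfrac{1}{N-1}$; any Gronwall estimate built on this yields a contraction of the form $1-c/N$, which degenerates as $N\to\infty$. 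The appeal to ``the pull of the entire bulk'' cannot rescue this: in the two-sided case the bulk has no prescribed location (for instance $N-1$ agents may sit near $M^*$ and a single one near $m^*$), so no fixed fraction of the population is guaranteed to pull the top agent downward. The paper's resolution is structurally different: it works with the \emph{difference} $\langle x_i(t)-x_j(t),v\rangle$ of the two agents realizing $d(n\bar\tau)$ and adds the two sums coming from \eqref{hkp}; for each index $l$ the two delayed terms combine as $(\langle x_l(t-\tau(t)),v\rangle-M_{n-1})+(m_{n-1}-\langle x_l(t-\tau(t)),v\rangle)=m_{n-1}-M_{n-1}$, so the unknown delayed positions cancel and the full weight $\tfrac{N-1}{N-1}\psi_0=\psi_0$ survives, giving $\tfrac{d}{dt}\langle x_i-x_j,v\rangle\le (K-\psi_0)(M_{n-1}-m_{n-1})-K\langle x_i-x_j,v\rangle$ and hence the $N$-independent factor $1-\tfrac{\psi_0}{K}(1-e^{-K\bar\tau})$.

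A second issue: that cancellation produces the cross terms $-\tfrac{\psi_0}{N-1}\langle x_i(t-\tau(t))-x_j(t-\tau(t)),v\rangle$ (from $l=j$ in the $i$-sum and $l=i$ in the $j$-sum), which must be nonpositive for the estimate to go through. The paper secures this by a case split different from yours: either $\langle x_i(t_0)-x_j(t_0),v\rangle<0$ for some $t_0\in[n\bar\tau-2\bar\tau,n\bar\tau]$, in which case \eqref{4} alone already gives $d(n\bar\tau)\le(1-e^{-2K\bar\tau})D_{n-2}$, or this projection stays nonnegative on the whole window, which is precisely the sign information needed. Your midpoint dichotomy does not supply that sign, so the two-sided branch of your argument would need to be reorganized around the extremal pair rather than around the population's distribution relative to $\ell$.
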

\begin{proof}
	Trivially, if $d(n\bar{\tau})=0$, then of course inequality \eqref{n-2} holds for any constant $C\in (0,1)$. So, suppose $d(n\bar{\tau})>0$. Let $i,j=1,\dots,N$ be such that $d(n\bar{\tau})=\lvert x_{i}(n\bar{\tau})-x_{j}(n\bar{\tau})\rvert$. We set $$v=\frac{x_{i}(n\bar{\tau})-x_{j}(n\bar{\tau})}{\lvert x_{i}(n\bar{\tau})-x_{j}(n\bar{\tau})\rvert}.$$
	Then, $v$ is a unit vector for which we can write $$d(n\bar{\tau})=\langle x_{i}(n\bar{\tau})-x_{j}(n\bar{\tau}),v\rangle.$$
	Let us define $$M_{n-1}=\max_{l=1,\dots,N}\max_{s\in [n\bar{\tau}-2\bar{\tau},n\bar{\tau}-\bar{\tau}]}\langle x_{l}(s),v\rangle,$$
	$$m_{n-1}=\min_{l=1,\dots,N}\min_{s\in [n\bar{\tau}-2\bar{\tau},n\bar{\tau}-\bar{\tau}]}\langle x_{l}(s),v\rangle.$$
	Then $M_{n-1}-m_{n-1}\leq D_{n-1}$. Now, we distinguish two different situations.
	\par\textit{Case I.} Assume that there exists $t_{0}\in [n\bar{\tau}-2\bar{\tau},n\bar{\tau}]$ such that 
	$$\langle x_{i}(t_{0})-x_{j}(t_{0}),v\rangle<0.$$
	Then from \eqref{4} with $n\bar{\tau}\geq t_{0}\geq n\bar{\tau}-2\bar{\tau}$ we have that $$d(n\bar{\tau})\leq e^{-K(n\bar{\tau}-t_{0})}\langle x_{i}(t_{0})-x_{j}(t_{0}),v\rangle+(1-e^{-K(n\bar{\tau}-t_{0})})D_{n-2}$$$$\leq (1-e^{-K(n\bar{\tau}-t_{0})})D_{n-2}\leq (1-e^{-2K\bar{\tau}})D_{n-2}.$$
	\par\textit{Case II.} Suppose that \begin{equation}\label{pos}
		\langle x_{i}(t)-x_{j}(t),v\rangle\geq 0,\quad \forall t\in [n\bar{\tau}-2\bar{\tau},n\bar{\tau}].
	\end{equation}
	Then, for every  $t\in [n\bar{\tau}-\bar{\tau},n\bar{\tau}]$ we have that 
$$
\begin{array}{l}
\displaystyle{
\frac{d}{dt}\langle x_{i}(t)-x_{j}(t),v\rangle=\frac{1}{N-1}\sum_{l:l\neq i}\psi(x_{i}(t),x_{l}(t-\tau(t)))\langle x_{l}(t-\tau(t))-x_{i}(t),v\rangle}\\
\displaystyle{\hspace{1.1cm}-\frac{1}{N-1}\sum_{l:l\neq j}\psi(x_{i}(t),x_{l}(t-\tau(t)))\langle x_{l}(t-\tau(t))-x_{j}(t),v\rangle}\\
\displaystyle{\hspace{0.6cm}=\frac{1}{N-1}\sum_{l:l\neq i}\psi(x_{i}(t),x_{l}(t-\tau(t)))(\langle x_{l}(t-\tau(t)),v\rangle-M_{n-1}+M_{n-1}-\langle x_{i}(t),v\rangle)}\\
\displaystyle{\hspace{1.1cm}+\frac{1}{N-1}\sum_{l:l\neq j}\psi(x_{i}(t),x_{l}(t-\tau(t)))(\langle x_{j}(t),v\rangle-m_{n-1}+m_{n-1}-\langle x_{l}(t-\tau(t)),v\rangle)}\\
\displaystyle{\hspace{5.5 cm}:=S_1+S_2.}
\end{array}
$$
	Now, being $t\in [n\bar{\tau}-\bar{\tau},n\bar{\tau}]$, it holds that $t-\tau(t)\in [n\bar{\tau}-2\bar{\tau},n\bar{\tau}]$. Therefore, both $t,t-\tau(t)\geq n\bar{\tau}-2\bar{\tau}$ and from \eqref{scalpr} we have that
	\begin{equation}\label{7}
		m_{n-1}\leq\langle x_{k}(t),v\rangle\leq M_{n-1}, \quad m_{n-1}\leq\langle x_{k}(t-\tau(t)),v\rangle\leq M_{n-1},\quad \forall k=1,\dots,N.
	\end{equation}
	Therefore, using \eqref{boundsol}, we get
	$$
\begin{array}{l}
\displaystyle{
S_1=\frac{1}{N-1}\sum_{l:l\neq i}\psi(x_{i}(t),x_{l}(t-\tau(t)))(\langle x_{l}(t-\tau(t)),v\rangle-M_{n-1})}\\
\displaystyle{\hspace{0.7 cm}+\frac{1}{N-1}\sum_{l:l\neq i}\psi(x_{i}(t),x_{l}(t-\tau(t)))(M_{n-1}-\langle x_{i}(t),v\rangle)}\\
\displaystyle{\hspace{0.4 cm}\leq \frac{1}{N-1}\psi_{0}\sum_{l:l\neq i}(\langle x_{l}(t-\tau(t)),v\rangle-M_{n-1})+K(M_{n-1}-\langle x_{i}(t),v\rangle)},
\end{array}
$$
	and	
	$$
\begin{array}{l}
\displaystyle{
S_2=\frac{1}{N-1}\sum_{l:l\neq j}\psi(x_{i}(t),x_{l}(t-\tau(t)))(\langle x_{j}(t),v\rangle-m_{n-1})}\\
\displaystyle{\hspace{0.7 cm}+\frac{1}{N-1}\sum_{l:l\neq j}\psi(x_{i}(t),x_{l}(t-\tau(t)))(m_{n-1}-\langle x_{l}(t-\tau(t)),v\rangle)}\\
	\displaystyle{\hspace{0.4 cm}\leq K(\langle x_{j}(t),v\rangle-m_{n-1})+\frac{1}{N-1}\psi_{0}\sum_{l:l\neq j}(m_{n-1}-\langle x_{l}(t-\tau(t)),v\rangle).}
\end{array}
$$
	Combining this last fact with \eqref{7} it comes that 
	$$\begin{array}{l}
	\vspace{0.2cm}\displaystyle{\frac{d}{dt}\langle x_{i}(t)-x_{j}(t),v\rangle\leq K(M_{n-1}-m_{n-1}-\langle x_{i}(t)-x_{j}(t),v\rangle)}\\
	\vspace{0.2cm}\displaystyle{\hspace{1.8 cm}+\frac{1}{N-1}\psi_{0}\sum_{l:l\neq i,j}(\langle x_{l}(t-\tau(t)),v\rangle-M_{n-1}+m_{n-1}-\langle x_{l}(t-\tau(t)),v\rangle)}\\
	\vspace{0.2cm}\displaystyle{\hspace{1.8 cm}+\frac{1}{N-1}\psi_{0}(\langle x_{j}(t-\tau(t)),v\rangle-M_{n-1}+m_{n-1}-\langle x_{i}(t-\tau(t)),v\rangle)}\\
	\vspace{0.2cm}\displaystyle{\hspace{1.5 cm}=K(M_{n-1}-m_{n-1})-K\langle x_{i}(t)-x_{j}(t),v\rangle+\frac{N-2}{N-1}\psi_{0}(-M_{n-1}+m_{n-1})}\\
	\vspace{0.2cm}\displaystyle{\hspace{1.8 cm}+\frac{1}{N-1}\psi_{0}(\langle x_{j}(t-\tau(t)),v\rangle-M_{n-1}+m_{n-1}-\langle x_{i}(t-\tau(t)),v\rangle)}.
\end{array}
$$
Therefore, since from \eqref{pos} $\langle x_{i}(t-\tau(t))-x_{j}(t-\tau(t)),v\rangle\geq 0$, we get
$$\begin{array}{l}
	\vspace{0.2cm}\displaystyle{
\frac{d}{dt}\langle x_{i}(t)-x_{j}(t),v\rangle\leq K(M_{n-1}-m_{n-1})-K\langle x_{i}(t)-x_{j}(t),v\rangle
}\\
	\vspace{0.4 cm}\displaystyle{\hspace{1.8 cm}
+\frac{N-2}{N-1}\psi_{0}(-M_{n-1}+m_{n-1})+\frac{1}{N-1}\psi_{0}(-M_{n-1}+m_{n-1})
}\\
\vspace{0.4 cm}\displaystyle{\hspace{1.8 cm}
-\frac{1}{N-1}\psi_{0}\langle x_{i}(t-\tau(t))-x_{j}(t-\tau(t)),v\rangle
}\\
	\vspace{0.3 cm}\displaystyle{\hspace{1.5 cm}\leq K(M_{n-1}-m_{n-1})-K\langle x_{i}(t)-x_{j}(t),v\rangle+\psi_{0}(-M_{n-1}+m_{n-1})
}\\
	\displaystyle{\hspace{1.5 cm}=\left(K-\psi_{0}\right)(M_{n-1}-m_{n-1})-K\langle x_{i}(t)-x_{j}(t),v\rangle.}
	\end{array}$$
	Hence, from Gronwall's inequality it comes that $$\langle x_{i}(t)-x_{j}(t),v\rangle \leq e^{-K(t-n\bar{\tau}+\bar{\tau})}\langle x_{i}(n\bar{\tau}-\bar{\tau})-x_{j}(n\bar{\tau}-\bar{\tau}),v\rangle$$$$+\left(K-\psi_{0}\right)(M_{n-1}-m_{n-1})\int_{n\bar{\tau}-\bar{\tau}}^{t}e^{-K(t-s)}ds,$$
	for all $t\in [n\bar{\tau}-\bar{\tau},n\bar{\tau}]$. In particular, for $t=n\bar{\tau}$ it comes that 
	$$\begin{array}{l}
	\vspace{0.2cm}\displaystyle{d(n\bar{\tau})\leq e^{-K\bar{\tau}}\langle x_{i}(n\bar{\tau}-\bar{\tau})-x_{j}(n\bar{\tau}-\bar{\tau}),v\rangle+\frac{K-\psi_{0}}{K}(M_{n-1}-m_{n-1})(1-e^{-K\bar{\tau}})}\\
	\vspace{0.2cm}\displaystyle{\hspace{1.3cm}\leq e^{-K\bar{\tau}}\lvert x_{i}(n\bar{\tau}-\bar{\tau})-x_{j}(n\bar{\tau}-\bar{\tau})\rvert\lvert v\rvert+\frac{K-\psi_{0}}{K}(M_{n-1}-m_{n-1})(1-e^{-K\bar{\tau}})}\\
	\displaystyle{\hspace{1.3 cm}\leq e^{-K\bar{\tau}} d(n\bar{\tau}-\bar{\tau})+\frac{K-\psi_{0}}{K}(M_{n-1}-m_{n-1})(1-e^{-K\bar{\tau}})}
	\end{array}$$
	Then, by recalling that $M_{n-1}-m_{n-1}\leq D_{n-1}$ we get
	$$d(n\bar{\tau})\leq e^{-K\bar{\tau}} d(n\bar{\tau}-\bar{\tau})+\frac{K-\psi_{0}}{K}D_{n-1}(1-e^{-K\bar{\tau}})$$$$\hspace{1.4cm}\leq e^{-K\bar{\tau}}d(n\bar{\tau}-\bar{\tau})+\frac{K-\psi_{0}}{K}D_{n-1}(1-e^{-K\bar{\tau}}).$$
	Finally, by using \eqref{dist} and \eqref{dec} we have that that \begin{equation}\label{9}
		\begin{split}
		d(n\bar{\tau})&\leq e^{-K\bar{\tau}}D_{n}+\frac{K-\psi_{0}}{K}D_{n-1}(1-e^{-K\bar{\tau}})\\&\leq e^{-K\bar{\tau}}D_{n-2}+\frac{K-\psi_{0}}{K}D_{n-2}(1-e^{-K\bar{\tau}})\\&=\left[1-\,\frac{\psi_{0}}{K}(1-e^{-K\bar{\tau}})\right]D_{n-2}.
		\end{split}
	\end{equation}
	Now, we set 
\begin{equation}\label{C}
C=\max\left\{1-e^{-2K\bar{\tau}},1-\,\frac{\psi_{0}}{K}(1-e^{-K\bar{\tau}})\right\}\in(0,1).
\end{equation}
	Then, taking into account \eqref{9}, we can conclude that $C$ is the constant for which inequality \eqref{n-2} holds.
\end{proof}
\section{Proof of Theorem \ref{cons}}\label{consensus_sec}

\setcounter{equation}{0}

\begin{proof}
	Let $(x_{i})_{i=1,\dots,N}$ be solution to \eqref{hkp}, \eqref{incond}. We claim that 
	\begin{equation}\label{10}
		D_{n+1}\leq \tilde{C}D_{n-2},\quad \forall n\geq2,
	\end{equation} 
	for some constant $\tilde{C}\in (0,1)$. Indeed, given $n\geq 2$, from \eqref{dec}, \eqref{n+1} and \eqref{n-2} we have that $$\begin{array}{l}
	\vspace{0.3cm}\displaystyle{D_{n+1}\leq e^{-K\bar{\tau}}d(n\bar{\tau})+(1-e^{-K\bar{\tau}})D_{n}}\\
	\vspace{0.3cm}\displaystyle{\hspace{1cm}\leq e^{-K\bar{\tau}}CD_{n-2}+(1-e^{-K\bar{\tau}})D_{n}}\\
	\vspace{0.3cm}\displaystyle{\hspace{1cm}\leq e^{-K\bar{\tau}}CD_{n-2}+(1-e^{-K\bar{\tau}})D_{n-2}}\\
	\displaystyle{\hspace{1cm}\leq(1-e^{-K\bar{\tau}}(1-C)) D_{n-2},}
	\end{array}$$
where the constant $C$ is defined in \eqref{C}.
	So, setting $$\tilde{C}=1-e^{-K\bar{\tau}}(1-C),$$
	we can conclude that $\tilde{C}\in (0,1)$ is the constant for which \eqref{10} holds true.
	\\This implies that \begin{equation}\label{11}
		D_{3n}\leq \tilde{C}^{n}D_{0},\quad \forall n\geq 1.
	\end{equation}
	Indeed, by induction, if $n=1$ we know from \eqref{10} that $$D_{3}\leq \tilde{C}D_{0}.$$
	So, assume that \eqref{11} holds for $n>1$ and we prove it for $n+1$. By using again \eqref{10} and from the induction hypothesis it comes that $$D_{3(n+1)}\leq\tilde{C}D_{3n}\leq \tilde{C}\tilde{C}^{n}D_{0}=\tilde{C}^{n+1}D_{0},$$
	i.e. \eqref{11} is fulfilled.
	\\Notice that, from \eqref{11}, we have that 
$$\displaystyle{D_{3n}\leq\left  (\sqrt[3]{\tilde{C}}\right )^{3n}D_{0}=e^{3n\ln\left(\sqrt[3]{\tilde{C}}\right )}D_{0}=e^{3n\bar{\tau}\ln\left(\tilde{C}\right)\frac{1}{3\bar{\tau}}}D_{0}, \quad \forall n\in\N.}$$
	Therefore, if we set $$\gamma=\frac{1}{3\bar{\tau}}\ln\left(\frac{1}{\tilde{C}}\right),$$
	we can write \begin{equation}\label{12}
		D_{3n}\leq e^{-3n\gamma\bar{\tau}}D_{0},\quad \forall n\in\N_0.
	\end{equation}
Now, fix $i,j=1,\dots,N$ and $t\ge 0.$ Then, $t\in [3n\bar{\tau}-\bar\tau, 3n\bar{\tau}+2\bar\tau]$, for some $n\in \mathbb{N}_0$. Therefore,  by using \eqref{dist} and \eqref{12}, it turns out that 
	$$\lvert x_{i}(t)-x_{j}(t)\rvert\leq D_{3n}\leq e^{-3n\gamma\bar{\tau}}D_{0}.$$
	Thus, being $t\leq 3n\bar{\tau}+2\bar\tau$, we get
$$\lvert x_{i}(t)-x_{j}(t)\rvert\leq e^{-\gamma t}\, e^{2\gamma\bar\tau}D_{0}.$$
Then,
$$d(t)\leq e^{-\gamma (t-2\bar\tau)}D_0,\quad \forall t\ge 0$$
	and \eqref{estimate} is proved. 
\end{proof}
\begin{oss}\label{caso-partic}
	Let us note that Theorem \ref{cons}  holds true, in particular,  for weights $a_{ij}$ of the form
	$$a_{ij}(t):=\frac{1}{N-1}\tilde\psi( \lvert x_{i}(t)-x_{j}(t-\tau (t))\rvert), \quad\forall t>0,\, \forall i,j=1,\dots,N,$$
	where $\tilde\psi:\RR\rightarrow\RR$ is a positive, bounded and continuous function. In this case, we can estimate from below $\tilde\psi$ in terms of the distance between the initial opinions, namely
 $$\tilde\psi(\lvert x_{i}(t)-x_{j}(t-\tau (t))\rvert)\geq \tilde\psi_{0}:=\min_{s\in[0,D_{0}]}\tilde\psi(s)>0,\quad \forall t\geq 0,$$
where $D_0$ is as in Definition \ref{notation}.
	Then, the proof of Theorem \ref{cons} follows with the same arguments we have employed  in the general case of weights of the type \eqref{weight}.
\end{oss}
\section{The continuum HK-model with pointwise time delay}\label{PDE} 

\setcounter{equation}{0}

In this section, we consider the continuum model obtained as mean-field limit of the particle system
when $N\rightarrow \infty.$ Let $\mathcal{M}(\RR^{d})$ be the set of probability measures on the space $\RR^{d}$. Then, the continuum model associated to the particle system \eqref{hkp} is given by
\begin{equation}\label{kinetic}
\begin{array}{l}
\displaystyle{\partial_t \mu_t+ \mbox{\rm div\,} (F[\mu_{t-\tau(t)}]\mu_t)=0, \quad t>0,} 
\\
\displaystyle{\mu_s=g_s, \quad x\in \RR^d, \quad  s\in[-\bar\tau,0],}
\end{array}
\end{equation}
where the velocity field $F$ is defined as 
\begin{equation}
\label{F}
F[\mu_{t-\tau(t)}](x)=\int_{\RR^d} \psi (x, y)(y-x)\,d\mu_{t-\tau(t)}(y),
\end{equation}
and $g_s \in \mathcal{C}([-\bar{\tau},0];\mathcal{M}(\RR^d))$. For  the continuum model, as in \cite{CPP}, we  assume that the delay function $\tau(\cdot)$ is bounded from below, namely there exists a strictly positive constant $\tau^*>0$ such that
\[
\tau(t)\geq \tau^*, \quad \forall \,t\geq 0.
\]
Moreover, we assume that the potential $\psi(\cdot,\cdot)$ in \eqref{F} is also Lipschitz continuous, namely for any $(x,y),(x',y')\in\RR^{2d}$ there exists $L>0$ such that
$$
|\psi(x,y)-\psi(x', y')|\leq L(|y-y'|+|x-x'|).
$$
\begin{defn}
Let $T>0$. We say that $\mu_t\in \mathcal{C}([0,T);\mathcal{M}(\RR^d))$ is a measure-valued solution to \eqref{kinetic} on the time interval $[0,T)$ if for all $\varphi\in \mathcal{C}^\infty_c(\RR^d\times [0,T))$ we have:
\begin{equation}
\label{weak_solution}
\int_0^T \int_{\RR^d} \left( \partial_t\varphi+F[\mu_{t-\tau(t)}](x) \cdot \nabla_x\varphi \right) d\mu_t(x) dt +\int_{\RR^d} \varphi (x,0)dg_0(x) =0.
\end{equation}
\end{defn}
Before stating the consensus result for solutions to model \eqref{kinetic}, we first recall some
basic tools on probability spaces and measures.
\begin{defn}
Let $\mu,\nu\in \mathcal{M}(\RR^d)$ be two probability measures on $\RR^d$. We define the 1-Wasserstein distance between $\mu$ and $\nu$ as
$$
d_1(\mu,\nu):=\inf_{\pi\in \Pi(\mu,\nu)} \int_{\RR^d\times \RR^d} |x-y|d\pi(x,y),
$$
where $\Pi (\mu,\nu)$ is the space of all couplings for $\mu$ and $\nu$, namely all those probability measures on $\RR^{2d}$ having as marginals $\mu$ and $\nu$:
$$
\int_{\RR^d\times\RR^d} \varphi (x)d\pi(x,y)=\int_{\RR^d} \varphi(x)d\mu(x), \quad \int_{\RR^d\times\RR^d} \varphi(y)d\pi(x,y)=\int_{\RR^d}\varphi(y)d\nu(y),
$$
for all $\varphi \in \mathcal{C}_b(\RR^d)$.
\end{defn}
Let us introduce the space $\mathcal{P}_1$ of all probability measures with finite first-order moment. It is  well-known that $(\mathcal{P}_1(\RR^d),d_1(\cdot,\cdot))$ is a complete metric space.

Now, we define the position diameter for a compactly supported measure $g \in {\mathcal{P}}_1(\RR^d)$ as follows:
$$d_X[g]:=\mbox{diam}(supp\ g).$$

Since the consensus result for the particle model \eqref{hkp} holds without any upper bounds on the time delay $\tau(\cdot)$, one can improve the consensus theorem for the PDE model \eqref{kinetic} obtained in \cite{CPP} removing the smallness assumption on the time delay $\tau(t).$
We omit the proof since, once we have the result for the particle system \eqref{hkp}, the consensus estimate for the continuum model is obtained with arguments analogous to the ones in \cite{CPP} (see also \cite{PPpreprint}).

\begin{thm}
Let $\mu_t\in \mathcal{C}([0,T];\mathcal{P}_1(\RR^d))$ be a measure-valued solution to \eqref{kinetic} with compactly supported initial datum $g_s\in \mathcal{C}([-\bar\tau,0];\mathcal{P}_1(\RR^d))$ and let $F$ as in \eqref{F}.
Then, there exists a constant $C>0$  such that 
$$
d_X(\mu_t)\leq \left( \max_{s\in[-\bar\tau,0]} d_X(g_s) \right) e^{-Ct},
$$
for all $t\ge 0.$
\end{thm}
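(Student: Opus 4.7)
The plan is to exploit the fact that the particle-level estimate in Theorem~\ref{cons} produces a rate $\gamma$ that depends only on $K,\psi_0,\bar\tau$ and not on $N$, and then to recover the continuum statement via a mean-field limit in the $1$-Wasserstein distance. This is exactly the path followed in \cite{CPP}: approximate the initial datum by an empirical measure on $N$ Dirac masses, evolve these under the particle ODE system \eqref{hkp}, and transfer the diameter decay back to the measure $\mu_t$ in the limit $N\to\infty$.

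The concrete steps would be as follows. First, I would fix $T>0$ and approximate each $g_s$ (for $s\in[-\bar\tau,0]$) by an empirical measure $\mu^N_s=\frac{1}{N}\sum_{i=1}^N\delta_{x^0_i(s)}$, chosen so that $\sup_{s\in[-\bar\tau,0]}d_1(g_s,\mu^N_s)\to 0$ and $\mathrm{supp}\,\mu^N_s\subseteq\mathrm{supp}\,g_s$ for every $s$ (so the initial position diameter is not increased). Second, I would solve the particle system \eqref{hkp}--\eqref{incond} with these initial data and form the empirical measure $\mu^N_t=\frac{1}{N}\sum_i\delta_{x_i(t)}$: a direct pushforward/characteristics computation shows that $\mu^N_t$ is a measure-valued solution of \eqref{kinetic} in the weak sense \eqref{weak_solution}. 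Third, I would apply Theorem~\ref{cons} to get
\begin{equation*}
d_X(\mu^N_t)=\max_{i,j}|x_i(t)-x_j(t)|\le\left(\max_{s\in[-\bar\tau,0]}d_X(g_s)\right)e^{-\gamma(t-2\bar\tau)},
\end{equation*}
with $\gamma$ independent of $N$. Finally, I would pass to the limit $N\to\infty$: the Lipschitz assumption on $\psi$, the lower bound $\tau(t)\ge\tau^*$, and a Gr\"onwall argument in $d_1$ (as in \cite{CPP}) give uniqueness of solutions to \eqref{kinetic} with continuous dependence on the initial datum in $1$-Wasserstein distance on $[0,T]$, so that $d_1(\mu^N_t,\mu_t)\to0$ uniformly on compact time intervals. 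Since the supports of $\mu^N_t$ stay uniformly bounded (by \eqref{boundsol}), weak convergence together with tightness upgrades to convergence of the position diameter, and the exponential estimate passes to $\mu_t$ with $C=\gamma$ (up to absorbing the factor $e^{2\gamma\bar\tau}$ into a prefactor on a finite interval, or adjusting $C<\gamma$ for the stated form).

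The main technical obstacle is the limit passage rather than the particle estimate: one needs a stability result in $d_1$ for \eqref{kinetic} in the presence of the time-delay argument $\mu_{t-\tau(t)}$, and this is precisely where both the Lipschitz hypothesis on $\psi$ and the strict positivity $\tau(t)\ge\tau^*>0$ are used to keep the delayed evaluation well-defined and to close a Gr\"onwall inequality for $d_1(\mu^N_t,\mu_t)$ (the lower bound on $\tau$ prevents the delayed term from degenerating into an instantaneous one, for which a finer propagation-of-chaos argument would be needed). A secondary point is that $d_X$ is not $d_1$-continuous on all of $\mathcal P_1(\RR^d)$, but it is continuous on sets of measures with uniformly bounded support, which is exactly the setting here thanks to the $N$-independent bound \eqref{boundsol} transferred to $\mu^N_t$. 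Once these two ingredients are in place, the conclusion follows verbatim from the arguments of \cite{CPP, PPpreprint}, which is why the authors justifiably omit the proof.
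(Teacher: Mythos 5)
Your proposal follows exactly the route the paper intends: the authors omit this proof and refer to the mean-field-limit argument of \cite{CPP} (and \cite{PPpreprint}), which is precisely your scheme of empirical approximation, application of the $N$-independent particle estimate of Theorem~\ref{cons}, and passage to the limit via $d_1$-stability using the Lipschitz continuity of $\psi$ and the lower bound $\tau(t)\ge\tau^*$. The only point worth tightening is the bookkeeping of constants at the end: the particle bound carries the prefactor $D_0=\max_{i,j}\max_{r,s\in[-\bar\tau,0]}|x_i(r)-x_j(s)|$, which can exceed $\max_{s\in[-\bar\tau,0]}d_X(g_s)$, and the factor $e^{2\gamma\bar\tau}$ cannot simply be ``absorbed'' for small $t$, so one must combine the monotonicity of the diameter with a genuine reduction of the exponent to some $C<\gamma$ to reach the stated form --- but this is a routine adjustment, not a gap.
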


\section{Distributed time delay}\label{distributed}

\setcounter{equation}{0}
Here, we study the asymptotic behavior of solutions to system \eqref{hkd}. As before, one can prove the following crucial lemma.

	\begin{lem} Let $\{x_i\}_{i=1,\dots,N}$ be a solution to system \eqref{hkd} with continuous initial conditions. Then,
		for each $v\in \RR^{d}$ and $T\geq 0$, we have that 
		\begin{equation}\label{scalpr2}
		\min_{j=1,\dots,N}\min_{s\in[T-\bar\tau,T]}\langle x_{j}(s),v\rangle\leq \langle x_{i}(t),v\rangle\leq \max_{j=1,\dots,N}\max_{s\in[T-\bar\tau,T]}\langle x_{j}(s),v\rangle,
		\end{equation}for all  $t\geq T-\bar\tau$ and $i=1,\dots,N$. 
	\end{lem}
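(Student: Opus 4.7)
The plan is to mimic, almost verbatim, the continuity-plus-Gronwall argument used in the proof of Lemma \ref{L1}, with the only real change being the bookkeeping for the integral that replaces the pointwise evaluation $x_j(t-\tau(t))$. Fix $v\in\RR^d$ and $T\ge 0$, set $M_T:=\max_{j}\max_{s\in[T-\bar\tau,T]}\langle x_j(s),v\rangle$, and for $\epsilon>0$ define
$$K^\epsilon:=\Bigl\{t>T:\max_{i=1,\dots,N}\langle x_i(s),v\rangle<M_T+\epsilon\ \ \forall s\in[T,t)\Bigr\}.$$
By continuity $K^\epsilon\neq\emptyset$; set $S^\epsilon:=\sup K^\epsilon$. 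As in Lemma \ref{L1}, the goal is to show $S^\epsilon=+\infty$ by contradiction, and then let $\epsilon\to 0$ to obtain the upper bound in \eqref{scalpr2}. The lower bound then follows by applying the upper bound to $-v$.

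The key computation is the differential inequality for $\langle x_i(t),v\rangle$ on $(T,S^\epsilon)$. For such $t$, the integration variable $s$ ranges in $[t-\tau_2(t),t-\tau_1(t)]\subseteq[T-\bar\tau,S^\epsilon)$, so by the definition of $K^\epsilon$ together with the definition of $M_T$ we have $\langle x_j(s),v\rangle<M_T+\epsilon$ for every $j$ and every such $s$. Hence
$$\frac{d}{dt}\langle x_i(t),v\rangle=\frac{1}{h(t)}\sum_{j\neq i}\int_{t-\tau_2(t)}^{t-\tau_1(t)}\alpha(t-s)a_{ij}(t;s)\bigl(\langle x_j(s),v\rangle-\langle x_i(t),v\rangle\bigr)ds.$$
Since $\langle x_i(t),v\rangle<M_T+\epsilon$ as well, the factor $M_T+\epsilon-\langle x_i(t),v\rangle\ge 0$ may be pulled out after replacing $\langle x_j(s),v\rangle$ by the larger quantity $M_T+\epsilon$, giving
$$\frac{d}{dt}\langle x_i(t),v\rangle\le\frac{M_T+\epsilon-\langle x_i(t),v\rangle}{h(t)}\int_{t-\tau_2(t)}^{t-\tau_1(t)}\alpha(t-s)\sum_{j\neq i}a_{ij}(t;s)\,ds.$$
Using $\sum_{j\neq i}a_{ij}(t;s)\le K$ together with the change of variables $u=t-s$, which yields $\int_{t-\tau_2(t)}^{t-\tau_1(t)}\alpha(t-s)ds=\int_{\tau_1(t)}^{\tau_2(t)}\alpha(u)du=h(t)$, the factors $h(t)$ cancel and we obtain the clean bound
$$\frac{d}{dt}\langle x_i(t),v\rangle\le K\bigl(M_T+\epsilon-\langle x_i(t),v\rangle\bigr),\quad t\in(T,S^\epsilon).$$

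From this point onward the proof is literally the same as in Lemma \ref{L1}: Gronwall's inequality produces
$$\langle x_i(t),v\rangle\le M_T+\epsilon-\epsilon\,e^{-K(S^\epsilon-T)},\quad t\in(T,S^\epsilon),$$
which contradicts the fact that $\lim_{t\to S^{\epsilon-}}\max_i\langle x_i(t),v\rangle=M_T+\epsilon$ forced by the definition of $S^\epsilon$. Thus $S^\epsilon=+\infty$, and letting $\epsilon\to 0^+$ gives the upper estimate in \eqref{scalpr2} for all $t>T$; on $[T-\bar\tau,T]$ it holds by definition of $M_T$. The lower bound is obtained by applying the upper bound to $-v$. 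The main (minor) obstacle is simply the change-of-variables verification that the $1/h(t)$ normalization absorbs the $\alpha$-weighted integral exactly, so that the kernel-based dynamics behaves, for the purpose of this a-priori estimate, like a convex combination and produces the same linear differential inequality as in the pointwise-delay case.
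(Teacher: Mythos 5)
Your proposal is correct and follows essentially the same route as the paper: the same set $K^\epsilon$, the same contradiction via Gronwall, and the same observation that $\frac{1}{h(t)}\int_{t-\tau_2(t)}^{t-\tau_1(t)}\alpha(t-s)\,ds=1$ so that the distributed kernel reduces to the pointwise-delay differential inequality $\frac{d}{dt}\langle x_i(t),v\rangle\le K(M_T+\epsilon-\langle x_i(t),v\rangle)$. The only cosmetic difference is that the paper bounds $\psi\le K$ inside the integral before summing rather than writing $\sum_{j\ne i}a_{ij}(t;s)\le K$, which is the same estimate.
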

	\begin{proof}
		First of all, we note that the inequalities in the statement are satisfied for every $t\in [T-\bar\tau,T]$.
		\\Now, fix $T\geq 0, $ a vector $v\in \RR^{d}$ and a positive constant $\epsilon.$  Define  the constant $M_{T}$ and the set 
		$K^{\epsilon}$ as in the proof of Lemma \ref{L1}.
 Then, denoted as before $S^{\epsilon}:=\sup K^{\epsilon},$
		it holds that $S^{\epsilon}>T$. \\We claim that $S^{\epsilon}=+\infty$. Indeed, suppose by contradiction that $S^{\epsilon}<+\infty$. Note that by definition of $S^{\epsilon}$ it turns out that \begin{equation}\label{maxd}
		\max_{i=1,\dots,N}\langle x_{i}(t),v\rangle<M_{T}+\epsilon,\quad \forall t\in (T,S^{\epsilon}),
		\end{equation}
		and \begin{equation}\label{tepsd}
		\lim_{t\to S^{\epsilon-}}\max_{i=1,\dots,N}\langle x_{i}(t),v\rangle=M_{T}+\epsilon.
		\end{equation}
		For all $i=1,\dots,N$ and $t\in (T,S^{\epsilon})$, we compute
		$$\begin{array}{l}
			\displaystyle{\frac{d}{dt}\langle x_{i}(t),v\rangle=\frac{1}{h(t)}\sum_{j:j\neq i}\int_{t-\tau_{2}(t)}^{t-\tau_{1}(t)}\alpha(t-s)a_{ij}(t;s)\langle x_{j}(s)-x_{i}(t),v\rangle ds}\\
			\displaystyle{\hspace{2cm}=\frac{1}{N-1}\frac{1}{h(t)}\sum_{j:j\neq i}\int_{t-\tau_{2}(t)}^{t-\tau_{1}(t)}\alpha(t-s)\psi(x_{i}(t), x_{j}(s))(\langle x_{j}(s),v\rangle-\langle x_{i}(t),v\rangle) ds.}
		\end{array}$$
		Notice that, being $t\in (T,S^{\epsilon})$, then  $t-\tau_{2}(t),t-\tau_{1}(t)\in (T-\bar\tau, S^{\epsilon})$ and
		\begin{equation}\label{t-taud}
		\langle x_{j}(s),v\rangle< M_{T}+\epsilon,\quad \forall s\in [t-\tau_{2}(t),t-\tau_{1}(t)],\,\forall j=1, \dots, N.
		\end{equation}
		Moreover, \eqref{maxd} implies that $$\langle x_{i}(t),v\rangle<M_{T}+\epsilon,$$
		so that $$M_{T}+\epsilon-\langle x_{i}(t),v\rangle\geq 0.$$
		Combining this last fact with \eqref{t-taud} and by recalling of \eqref{h(t)}, we can write 
		$$\begin{array}{l}
		\vspace{0.2cm}\displaystyle{\frac{d}{dt}\langle x_{i}(t),v\rangle\leq \frac{1}{N-1}\frac{1}{h(t)}\sum_{j:j\neq i}\int_{t-\tau_{2}(t)}^{t-\tau_{1}(t)}\alpha(t-s)\psi(x_{i}(t),x_{j}(s))(M_{T}+\epsilon-\langle x_{i}(t),v\rangle)ds}\\
		\vspace{0.2cm}\displaystyle{\hspace{2cm}\leq \frac{K}{N-1}\frac{1}{h(t)}(M_{T}+\epsilon-\langle x_{i}(t),v\rangle)\sum_{j:j\neq i}\int_{t-\tau_{2}(t)}^{t-\tau_{1}(t)}\alpha(t-s)ds}\\
		\vspace{0.2cm}\displaystyle{\hspace{2cm}=K\frac{1}{h(t)}(M_{T}+\epsilon-\langle x_{i}(t),v\rangle)\int_{t-\tau_{2}(t)}^{t-\tau_{1}(t)}\alpha(t-s)ds}\\
		\vspace{0.3cm}\displaystyle{\hspace{2cm}=K(M_{T}+\epsilon-\langle x_{i}(t),v\rangle),}
		\end{array}$$
		for all $t\in (T, S^{\epsilon}).$
		Then,  the Gronwall's Lemma allows us to conclude the proof of the second inequality arguing analogously to the proof of Lemma \ref{L1}.
Also the proof of the first inequality is obtained similarly with respect to the pointwise time delay case. We omit the details.

\end{proof}

As before, one can define the quantities $D_n,$ $n\in \N_0,$ and prove the analogous, for solutions to the model with distributed time delay \eqref{hkd}, of the lemmas in Section \ref{Prel}.
Then, the following exponential convergence to consensus holds.

\begin{thm}\label{consd}
 Assume that $\psi:\RR^d\times\RR^d\rightarrow\RR$ is a positive, bounded, continuous function. Then, every solution $\{x_{i}\}_{i=1,\dots,N}$ to \eqref{hkd}, with continuous initial conditions $x_{i}^{0}:[-\bar\tau,0]\rightarrow\RR^{d},$ satisfies the exponential decay estimate 
$$
d(t)\leq \left(\max_{i,j=1,\dots,N}\,\,\max_{r,s\in [-\bar\tau,0]}\lvert x_{i}(r)-x_{j}(s)\rvert\right)e^{-\gamma(t-2\bar{\tau})},\quad \forall  t\geq 0,
$$
for a suitable positive constant $\gamma$ independent of $N.$	
\end{thm}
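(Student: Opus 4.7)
The plan is to mirror the two-step strategy of Sections \ref{Prel} and \ref{consensus_sec}: first establish the distributed analogues of the recursive estimates \eqref{n+1} and \eqref{n-2} for the same quantities $D_n$, and then combine them exactly as in the proof of Theorem \ref{cons}. Starting from the already-proved scalar-product invariance \eqref{scalpr2}, one obtains with unchanged arguments the uniform bounds $|x_i(s) - x_j(t)| \leq D_0$ and $|x_i(t)| \leq M^0$ for all $s,t \geq -\bar\tau$, the monotonicity $D_{n+1} \leq D_n$, and the strict positivity $\psi(x_i(t), x_j(s)) \geq \psi_0 > 0$ for every $s \in [t-\bar\tau, t]$, thanks to the continuity of $\psi$ on the compact set where the solution takes values.

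The main notational modification is that in every computation of $\frac{d}{dt}\langle x_i(t), v\rangle$ one replaces the pointwise coefficient $\psi(x_i(t), x_j(t-\tau(t)))\langle x_j(t-\tau(t)) - x_i(t), v\rangle$ by the weighted mean $\frac{1}{h(t)}\int_{t-\tau_2(t)}^{t-\tau_1(t)}\alpha(t-s)\psi(x_i(t), x_j(s))\langle x_j(s) - x_i(t), v\rangle\,ds$. Since, by \eqref{h(t)}, $\alpha(t-s)/h(t)$ is a probability density on $[t-\tau_2(t), t-\tau_1(t)]$ and this interval lies in $[t-\bar\tau, t]$, the same upper and lower bounds $M_n$ and $m_n$ on $\langle x_j(s), v\rangle$ remain valid for $t \geq n\bar\tau$, and a routine Gronwall step yields the distributed analogue of \eqref{n+1}.

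The main obstacle, as in the pointwise case, will be the Case II analysis needed to reproduce \eqref{n-2}: under the assumption $\langle x_i(t) - x_j(t), v\rangle \geq 0$ on $[n\bar\tau - 2\bar\tau, n\bar\tau]$, one must extract a strictly negative drift for $\langle x_i(t) - x_j(t), v\rangle$ on $[n\bar\tau - \bar\tau, n\bar\tau]$. The crucial observation is that for such $t$ the integration interval $[t - \tau_2(t), t - \tau_1(t)]$ is contained in $[n\bar\tau - 2\bar\tau, n\bar\tau]$, so the positivity assumption applies for every integration variable $s$. Separating in each sum the diagonal terms ($l = j$ in the one coming from $\dot x_i$, $l = i$ in the one coming from $\dot x_j$), bounding $\psi$ from below by $\psi_0$ on the factors that are $\leq 0$ and from above by $K$ on those that are $\geq 0$, and using $\langle x_i(s) - x_j(s), v\rangle \geq 0$ inside the integral to turn the diagonal contribution into a uniform $\psi_0 (m_{n-1} - M_{n-1})$ term, one arrives at $\frac{d}{dt}\langle x_i(t) - x_j(t), v\rangle \leq (K-\psi_0)(M_{n-1}-m_{n-1}) - K\langle x_i(t) - x_j(t), v\rangle$. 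A Gronwall step then produces $d(n\bar\tau) \leq C D_{n-2}$ with the same constant $C \in (0,1)$ defined in \eqref{C}. Once both recursions are in place, the contraction $D_{n+1} \leq \tilde C D_{n-2}$ and the exponential decay of $D_{3n}$ follow verbatim from the proof of Theorem \ref{cons}, yielding the stated estimate for the distributed model.
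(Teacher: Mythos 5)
Your proposal is correct and follows exactly the route the paper takes: it proves the distributed analogue of Lemma \ref{L1} and then asserts that the lemmas of Section \ref{Prel} and the argument of Theorem \ref{cons} carry over verbatim, the only modification being the replacement of the pointwise delayed term by the normalized average $\frac{1}{h(t)}\int_{t-\tau_2(t)}^{t-\tau_1(t)}\alpha(t-s)(\cdot)\,ds$ over an interval contained in $[t-\bar\tau,t]$ (and, in Case II, in $[n\bar\tau-2\bar\tau,n\bar\tau]$). Your write-up in fact supplies more of the omitted details (the probability-density observation and the diagonal/off-diagonal splitting inside the integral) than the paper does, and they are all sound.
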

\begin{oss}
	Let us note that Theorem \ref{consd}  holds true, in particular,  for weights $a_{ij}$ of the form
	$$a_{ij}(t;s):=\frac{1}{N-1}\tilde\psi( \lvert x_{i}(t)-x_{j}(s)\rvert), \quad\forall t>0,\, \forall i,j=1,\dots,N,$$
	where $\tilde\psi:\RR\rightarrow\RR$ is a positive, bounded and continuous function. In this case, one can bound from below the influence function $\tilde\psi$ as explained in Remark \ref{caso-partic}.
\end{oss}

The related PDE model is now:
\begin{equation}\label{kinetic_distr}
\begin{array}{l}
\displaystyle{\partial_t \mu_t+ \mbox{\rm div\,} \left(\frac 1 {h(t)}\int_{t-\tau_2(t)}^{t-\tau_1(t)}\alpha (t-s)F[\mu_{s}] ds\,\mu_t\right )=0, \quad t>0,} 
\\
\displaystyle{\mu_s=g_s, \quad x\in \RR^d, \quad  s\in[-\bar\tau,0],}
\end{array}
\end{equation}
where the velocity field $F$ is given by
\begin{equation}
\label{Fd}
F[\mu_{s}](x)=\int_{\RR^d} \psi (x, y)(y-x)\,d\mu_{s}(y),
\end{equation}
and $g_s \in \mathcal{C}([-\bar{\tau},0];\mathcal{M}(\RR^d))$. For  the continuum model, as in \cite{P}, we  assume that the delay functions are bounded from below, namely there exists a strictly positive constant $\tau^*>0$ such that
\[
\tau_2(t)>\tau_1(t)\geq \tau^*, \quad \forall \,t\geq 0.
\]
Moreover, as before, we assume that the potential $\psi(\cdot,\cdot)$ in \eqref{Fd} is also Lipschitz continuous with respect to the two arguments.
\begin{defn}
Let $T>0$. We say that $\mu_t\in \mathcal{C}([0,T);\mathcal{M}(\RR^d))$ is a measure-valued solution to \eqref{kinetic_distr} on the time interval $[0,T)$ if for all $\varphi\in \mathcal{C}^\infty_c(\RR^d\times [0,T))$ we have:
$$
\int_0^T \int_{\RR^d} \left( \partial_t\varphi+\frac 1 {h(t)}\int_{t-\tau_2(t)}^{t-\tau_1(t)}\alpha (t-s) F[\mu_{s}](x) ds \cdot \nabla_x\varphi \right) d\mu_t(x) dt +\int_{\RR^d} \varphi (x,0)dg_0(x) =0.
$$
\end{defn}

Since the consensus result for the particle model \eqref{hkd} holds without any upper bounds on the time delays $\tau_1(\cdot), \tau_2(\cdot)$, one can improve the consensus theorem for the PDE model \eqref{kinetic_distr} of \cite{P}. Indeed, in \cite{P}, where the author concentrates in the case $\tau_1(t)\equiv 0,$ the consensus estimate is obtained under a smallness condition on the time delay. 
The proof is analogous, then we omit it.

\begin{thm}
Let $\mu_t\in \mathcal{C}([0,T];\mathcal{P}_1(\RR^d))$ be a measure-valued solution to \eqref{kinetic_distr} with compactly supported initial datum $g_s\in \mathcal{C}([-\bar\tau,0];\mathcal{P}_1(\RR^d))$ and let $F$ as in \eqref{Fd}.
Then, there exists a constant $C>0$  such that 
$$
d_X(\mu_t)\leq \left( \max_{s\in[-\bar\tau,0]} d_X(g_s) \right) e^{-Ct},
$$
for all $t\ge 0.$
\end{thm}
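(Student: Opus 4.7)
The plan is to deduce the continuum consensus estimate from the particle-level result of Theorem \ref{consd} via a mean-field limit argument, as done in \cite{CPP, P} for the simpler delay settings. The crucial observation is that the exponential rate $\gamma$ produced by Theorem \ref{consd} does not depend on the number of agents $N$, so approximating the initial measure-valued data by empirical measures and letting $N \to \infty$ should transfer the estimate intact to the continuum model.

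The proof would proceed in four steps. First, I would approximate the datum: for each $N \in \N$ choose continuous functions $x_i^{0,N} : [-\bar\tau, 0] \to \RR^d$, $i = 1, \dots, N$, with $x_i^{0,N}(s) \in \mathrm{supp}\, g_s$, so that the empirical measures
$$\mu_s^N := \frac{1}{N}\sum_{i=1}^N \delta_{x_i^{0,N}(s)}$$
satisfy $\sup_{s \in [-\bar\tau,0]} d_1(\mu_s^N, g_s) \to 0$ as $N \to \infty$. Second, solve the particle system \eqref{hkd} from these initial data; the trajectories $(x_i^N(t))$ yield empirical measures $\mu_t^N := \frac{1}{N}\sum_i \delta_{x_i^N(t)}$, which are measure-valued solutions of \eqref{kinetic_distr}. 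Third, apply Theorem \ref{consd}: since the particle positions sit initially inside $\mathrm{supp}\, g_s$, the cross-time initial diameter $D_0^N$ is controlled uniformly in $N$ by a constant multiple of $\max_{s \in [-\bar\tau,0]} d_X(g_s)$, producing
$$d_X(\mu_t^N) \leq C_0 \Big(\max_{s \in [-\bar\tau,0]} d_X(g_s)\Big) e^{-\gamma(t-2\bar\tau)}, \quad \forall\, t \geq 0,$$
uniformly in $N$. Fourth, pass to the limit: once $\mu_t^N \to \mu_t$ in $d_1$ on every compact $[0,T]$, the lower semicontinuity of $d_X$ under narrow (hence $d_1$) convergence gives the same estimate for $\mu_t$, after absorbing $e^{2\gamma\bar\tau}$ into a new constant $C$.

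The main obstacle lies in the mean-field limit itself, i.e. a Dobrushin-type stability estimate for the delayed transport equation \eqref{kinetic_distr}. Here both the Lipschitz continuity of $\psi$ and the strict lower bound $\tau_1(t) \geq \tau^* > 0$ are essential. The positivity of the delay allows a step-by-step construction on intervals of length $\tau^*$: on $[0, \tau^*]$ the argument $s \in [t - \tau_2(t), t - \tau_1(t)]$ lies entirely in $[-\bar\tau, 0]$, so the velocity field
$$V_t[\mu](x) := \frac{1}{h(t)} \int_{t-\tau_2(t)}^{t-\tau_1(t)} \alpha(t-s) F[\mu_s](x)\, ds$$
is determined entirely by the (known) initial data and does not involve the unknown solution. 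A standard coupling argument comparing the characteristic flows generated by $V_t[\mu^N]$ and $V_t[\mu]$, using the Lipschitz bound on $\psi$ together with the uniform support bounds from Theorem \ref{consd}, yields
$$d_1(\mu_t^N, \mu_t) \leq e^{L_{\tau^*} t} \sup_{s \in [-\bar\tau,0]} d_1(\mu_s^N, g_s), \quad t \in [0, \tau^*],$$
via Gronwall's inequality. Iterating over successive intervals $[k\tau^*, (k+1)\tau^*]$ extends the convergence to any compact time horizon, after which sending $N \to \infty$ completes the proof.
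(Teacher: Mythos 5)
Your proposal matches the paper's (omitted) argument: the paper explicitly relies on the $N$-independence of the constants in Theorem \ref{consd} and defers to the mean-field-limit arguments of \cite{CPP, P}, which is precisely the empirical-measure approximation, uniform particle estimate, Dobrushin-type stability built step-by-step on intervals of length $\tau^*$, and lower semicontinuity of the support diameter that you describe. The only loose end is cosmetic: your bound carries the prefactor $e^{2\gamma\bar\tau}$, which cannot literally be absorbed into the rate $C$ at small times, but this normalization issue is inherited from the theorem's statement rather than introduced by your proof.
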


\end{document}